\renewcommand{\vec}[1]{\boldsymbol{#1}}
\newcommand{\N}{\mathbb N}
\newcommand{\Z}{\mathbb Z}
\newcommand{\R}{\mathbb R}
\newcommand{\MF}{{\mathcal F}}
\newcommand{\MH}{{\mathcal H}}
\newcommand{\MNR}{{\mathcal R}}
\newcommand{\MS}{{\mathcal S}}
\newcommand{\MP}{{\mathcal P}}
\newcommand{\MV}{{\mathcal V}}
\newcommand{\MU}{{\mathcal U}}
\newcommand{\GL}{\mathsf{GL}}
\newcommand{\gldz}{\GL_d(\Z)}
\newcommand{\gldr}{\GL_d(\R)}
\newcommand{\sd}{{\mathcal S}^d}
\newcommand{\sdm}{{\mathcal S}^{d,m}}
\newcommand{\sdo}{{\mathcal S}^d_{>0}}
\newcommand{\sdmo}{{\mathcal S}^{d,m}_{>0}}
\DeclareMathOperator{\trace}{trace}
\DeclareMathOperator{\vol}{vol}
\DeclareMathOperator{\cone}{cone}
\DeclareMathOperator{\Min}{Min}
\DeclareMathOperator{\card}{card}
\DeclareMathOperator{\bd}{bd}
\DeclareMathOperator{\relint}{relint}
\DeclareMathOperator{\interior}{int}
\DeclareMathOperator{\grad}{grad}
\DeclareMathOperator{\hess}{hess}
\newtheorem{definition}{Definition}
\newtheorem{proposition}[definition]{Proposition}
\newtheorem{lemma}[definition]{Lemma}
\newtheorem{theorem}[definition]{Theorem}
\newtheorem{corollary}[definition]{Corollary}
\theoremstyle{definition}
\newtheorem*{example}{Example}
\newtheorem{remark}[definition]{Remark}
\newcounter{tab}
\newenvironment{bigtab}{\medskip%
                        \refstepcounter{tab}%
                        \begin{table}[htbp]%
                        \begin{center}}%
                       {\end{center}\end{table}\medskip}
\author{Achill Sch\"urmann}
\address{Institute of Applied Mathematics, Delft University of Technology, Mekelweg~4, 2628~CD Delft, The Netherlands}
\email{a.schurmann@tudelft.nl}
\thanks{The author was supported by the 
Deutsche Forschungsgemeinschaft (DFG) under grant SCHU 1503/4-2. 
He thanks the Hausdorff Research Institute for Mathematics for its 
hospitality and support.}
\title[Perfect, strongly eutactic lattices are periodic extreme]{Perfect, strongly eutactic lattices\\ are periodic extreme}
\begin{document}

\sloppy


\begin{abstract}
We introduce a parameter space for 
periodic point sets, given as unions of $m$~translates of point lattices.
In it we investigate 
the behavior of the sphere packing density 
function and derive sufficient conditions for local optimality.
Using these criteria we prove that perfect, 
strongly eutactic lattices cannot be locally 
improved to yield a periodic sphere packing 
with greater density. This applies in particular 
to the densest known lattice sphere packings 
in dimension $d\leq 8$ and $d=24$.
\end{abstract}


\subjclass[2000]{52C17, 11H55}


\maketitle

\centerline{\em Dedicated to Jacques Martinet on the occasion of his 70th birthday.}

\bigskip
\bigskip


%
%
%

\section{Introduction}

The classical and widely studied {\em sphere packing problem}
asks for a non-overlapping arrangement of equally sized 
spheres in a Euclidean space, 
such that the fraction of space covered by spheres is maximized.
The problem arose from the arithmetical study of positive 
definite quadratic forms.
By the works Thue \cite{thue-1910} and Hales \cite{hales-2005} 
the optimal arrangements of spheres are known in dimension~$2$ and~$3$.
We refer to \cite{gl-1987}, \cite{cs-1998}, \cite{martinet-2003}
and \cite{schuermann-2009} for details and further reading.

For reasons related to the historical roots of the sphere packing
problem, special attention has been on {\em (point) lattices}
as the discrete set of sphere centers.
In dimension~$2$ the {\em hexagonal lattice} and in dimension~$3$
the {\em face-centered-cubic lattice}
yield optimal sphere packings. 
For the restriction of the sphere packing problem to lattices,
the optimal configurations are known up to 
dimension~$8$ and in dimension~$24$ (see Table~\ref{tab:sphere-packing-results}).
Here, solutions are given by fascinating objects, 
the so-called {\em root lattices} and the {\em Leech lattice}.
We refer to \cite{cs-1998}, \cite{martinet-2003}
and \cite{ns-2008} for further information on these exceptional objects.

A major {\bf open problem} in the theory of sphere packing is to find a 
dimension in which there is a non-lattice packing that is denser than
any lattice packing.
In dimension~$10$ there exists a non-lattice sphere packing,
that is conjectured to have a higher density than any 
lattice sphere packing (see \cite{ls-1970}).
As shown in Table~\ref{tab:sphere-packing-results}, 
below dimension~$24$ similar sphere packings have been found 
in dimensions~$11$, $13$, $18$, $20$ and $22$.
All of them are {\em periodic}, that is,
a finite union of translates of a lattice sphere packing.
By a well-known conjecture, attributed by Gruber \cite{gruber-2007}
to Zassenhaus, optimal sphere packing density can always be attained 
by periodic sphere packings. It is known that 
their density comes arbitrarily close to the optimal value
(see for example~\cite[Appendix A]{ce-2003}).

A natural idea to obtain a better non-lattice sphere packing,
is to ``locally modify'' one of the optimal known lattice sphere packings 
in dimensions $d=4,\ldots,8$.
In this paper we show that such modifications are not possible 
within the set of all periodic sphere packings
(see Corollary \ref{cor:root_lattice_periodic_extreme}).
We more generally show in Theorem \ref{thm:main-periodic}
that such modifications are not possible for 
{\em perfect, strongly eutactic lattices}.

One may wonder why the restriction to periodic structures is
necessary. One could also consider more general discrete sets.
However, within the set of all discrete sets, we are not aware of
any notion of a ``local modification'' that on the one hand could
potentially lead to an improved sphere packing density, but on
the other hand would allow us to generalize the result of this paper.
For instance, a natural approach to define the $\epsilon$-neighborhood 
of a discrete set is as the collection of sets that can be obtained 
by changing the position of elements by at most an $\epsilon$~distance. 
However, such a local modification would not even 
change the sphere packing density. It is equal to a constant multiple of
the average number of points per unit volume, which could not be changed 
in such an $\epsilon$-neighborhood. In contrast to that, the local changes of 
periodic sets considered in this paper allow arbitrarily large displacements of points,
if they are far enough from the origin.

The paper is organized as follows.
In Section~\ref{sec:background} we recall some necessary background on
lattices and positive definite quadratic forms. 
In Section~\ref{sec:ryshkov} we introduce the so-called Ryshkov polyhedron,
and based on it we give a geometrical interpretation of Voronoi's characterization 
of locally optimal lattice sphere packings.
This viewpoint allows a natural generalization to study
local optimal periodic sphere packings. 
For their study we introduce a parameter space 
in Section~\ref{sec:periodic-parameter-space}.
We give characterizations of local optimal periodic sphere packings
with up to $m$~lattices translates in Section~\ref{sec:local-analysis}.
Based on these general characterizations we obtain one of the main results of
this paper in
Section~\ref{sec:periodic-extreme}: We show that
perfect, strongly eutactic lattices cannot locally be modified to yield 
a better periodic sphere packing -- they are  {\em periodic extreme} 
(see Definition~\ref{def:periodic-extreme}).

\begin{bigtab} \label{tab:sphere-packing-results}
\begin{tabular}{c|c|c|c}
$d$ & point set & $\quad\delta/\vol B^d\quad$ & author(s) \\
\hline
 $2$ & ${\mathsf A}_2$    &  $0.2886\ldots$  &  Lagrange, 1773, \cite{lagrange-1773}\\
 $3$ & ${\mathsf A}_3={\mathsf D}_3, \mathsf{\ast}$    &  $0.1767\ldots$  &  Gau{\ss}, 1840, \cite{gauss-1840}\\
 $4$ & ${\mathsf D}_4$    &  $0.125\phantom{0\ldots}$  &  Korkine \& Zolotareff, 1877, \cite{kz-1877}\\
 $5$ & ${\mathsf D}_5,\mathsf{\ast}$    &  $0.0883\ldots$  &  Korkine \& Zolotareff, 1877, \cite{kz-1877}\\
 $6$ & ${\mathsf E}_6,\mathsf{\ast}$    &  $0.0721\ldots$  &  Blichfeldt, 1935, \cite{blichfeldt-1934}\\
 $7$ & ${\mathsf E}_7,\mathsf{\ast}$    &  $0.0625\phantom{\ldots}$  &  Blichfeldt, 1935, \cite{blichfeldt-1934}\\
 $8$ & ${\mathsf E}_8$    &  $0.0625\phantom{\ldots}$  &  Blichfeldt, 1935, \cite{blichfeldt-1934}\\
 $9$ & ${\mathsf \Lambda}_9,\mathsf{\ast}$    &  $0.0441\ldots$  &  \\
 $10$ & ${\mathsf P}_{10c}$    &  $0.0390\ldots$  & Leech \& Sloane, 1970, \cite{ls-1970}\\ 
 $11$ & ${\mathsf P}_{11a}$    &  $0.0351\ldots$  & Leech \& Sloane, 1970, \cite{ls-1970}\\ 
 $12$ & ${\mathsf K}_{12}$    &  $0.0370\ldots$  &  \\ 
 $13$ & ${\mathsf P}_{13a}$    &  $0.0351\ldots$  & Leech \& Sloane, 1970, \cite{ls-1970}\\ 
 $14$ & ${\mathsf \Lambda}_{14},\mathsf{\ast}$    &  $0.0360\ldots$  &  \\
 $15$ & ${\mathsf \Lambda}_{15},\mathsf{\ast}$    &  $0.0441\ldots$  &  \\
 $16$ & ${\mathsf \Lambda}_{16},\mathsf{\ast}$    &  $0.0625\phantom{\ldots}$  &  \\
 $17$ & ${\mathsf \Lambda}_{17},\mathsf{\ast}$    &  $0.0625\phantom{\ldots}$  &  \\
 $18$ & ${\mathsf V}_{18}$    &  $0.0750\ldots$  &  Bierbrauer \& Edel, 1998, \cite{be-2000}\\
 $19$ & ${\mathsf \Lambda}_{19},\mathsf{\ast}$    &  $0.0883\ldots$  &  \\
 $20$ & ${\mathsf V}_{20}$    &  $0.1315\ldots$  &  Vardy, 1995, \cite{vardy-1995}\\
 $21$ & ${\mathsf \Lambda}_{21},\mathsf{\ast}$    &  $0.1767\ldots$  &  \\
 $22$ & ${\mathsf V}_{22}$    &  $0.3325\ldots$  &  Conway \& Sloane, 1996, \cite{cs-1996}\\
 $23$ & ${\mathsf \Lambda}_{23}$    &  $0.5\phantom{000\ldots}$  &  \\
 $24$ & ${\mathsf \Lambda}_{24}$    &  $1\phantom{.0000\ldots}$  &  Cohn \& Kumar, 2004, \cite{ck-2004}\\
\end{tabular}
\par\medskip
\textbf{Table \arabic{tab}. } 
Point sets defining best known sphere packings up to dimension $24$.
In dimensions $d\leq 8$ and $d=24$ the corresponding authors solved the lattice sphere packing problem.
The other mentioned authors found the listed, densest known periodic sphere packings.
The asterisk~$\mathsf{\ast}$ indicates that an equally dense, periodic non-lattice sphere packing is known.
\end{bigtab}

\section{Background on lattices and quadratic forms}

\label{sec:background}

\subsection*{Lattices and Periodic Sets}

A (full rank) {\em lattice} $L$ in $\R^d$ is a discrete subgroup
$L=\Z \vec{a}_1 + \ldots + \Z \vec{a}_d$ generated by $d$ linear independent 
(column) vectors $\vec{a}_i\in\R^d$. We say that these vectors 
form a {\em basis} of $L$ and associate it with the matrix
$A=(\vec{a}_1,\ldots, \vec{a}_d)\in\gldr$. 
We write $L=A\Z^d$. It is well-known that $L$ is generated in this way
precisely by the matrices $AU$ with $U\in\gldz$.
We refer to \cite{gl-1987} for details and more background on lattices.
Given a lattice $L$ and {\em translational vectors} $\vec{t}_i$, 
for say $i=1,\ldots,m$, the discrete set 
\begin{equation}  \label{eqn:periodic-set}
\Lambda = \bigcup_{i=1}^m \left( \vec{t}_i + L \right)
\end{equation}
is called a {\em periodic (point) set}.

The {\em sphere packing radius} $\lambda(\Lambda)$ of a discrete set~$\Lambda$
(not necessarily periodic) 
in the Euclidean space $\R^d$ (with norm $\|\cdot \|$)
is defined as the infimum of half the distances between distinct points: 
\begin{equation*} 
\lambda(\Lambda) = \frac{1}{2} \inf_{\vec{x},\vec{y}\in\Lambda, \vec{x}\not=\vec{y}} \|\vec{x}-\vec{y}\|.
\end{equation*}
The sphere packing radius is the largest possible radius $\lambda$ such that
solid spheres of radius~$\lambda$ and with centers in~$\Lambda$
do not overlap.
Denoting the solid unit sphere by $B^d$, the {\em sphere packing} defined
by $\Lambda$ is the union of non-overlapping spheres
$$
\bigcup_{\vec{x}\in \Lambda} \left( \vec{x} + \lambda(\Lambda) B^d \right).
$$
Its {\em density} $\delta(\Lambda)$ is, loosely speaking, defined as the fraction of space covered by spheres. 
We can make this definition more precise 
by considering a cube 
$$
C=\{\vec{x}\in \R^d : |x_i| \leq 1/2 \}
$$ 
and setting
$$
\delta(\Lambda)
=
\lambda(\Lambda)^d \vol B^d \cdot
\liminf_{\lambda\to\infty} \frac{\card (\Lambda \cap \lambda C)}{\vol \lambda C}
.
$$

If the limit inferior above is a true limit, the cube in the definition 
can be replaced by any other compact set~$C$ that is the closure of its interior,
without the value of $\delta$ changing.
We say that a corresponding set $\Lambda$ is {\em uniformly dense} in that case.
It can be shown that the supremum of $\delta(\Lambda)$ over all discrete sets
is attained by a uniformly dense set $\Lambda$.
We refer to \cite{groemer-1963} and \cite[Appendix A]{ce-2003} for further reading.

For general discrete sets, it may be difficult to compute the density,
respectively the limit inferior in the definition. For a lattice 
the limit inferior can simply be replaced by $1/\det L$,
where $\det L = |\det A|$ is the {\em determinant} of the lattice~$L=A\Z^d$.
Note that the determinant of $L$ 
is independent of the particular choice of the basis~$A$.
For periodic sets $\Lambda$ as in \eqref{eqn:periodic-set}
we get the estimate 
$$
\delta(\Lambda) \leq \frac{m \lambda(\Lambda)^d \vol B^d}{\det L} , 
$$
with equality if and only if the lattice translates 
$\vec{t}_i+L$ are pairwise disjoint. 

%
%
%

\subsection*{Positive definite quadratic forms}

\label{sec:background-pqf}

Among similarity classes of lattices, hence in the space
$O_d(\R)\backslash \gldr / \gldz$, there exist only finitely many
local maxima of~$\delta$ up to scaling. In order to characterize and to work with them,
i.e., enumerate them, it is convenient to use the language of
real {\em positive definite quadratic forms} (PQFs for short). 
These are simply identified with 
the set $\sdo$ of real symmetric, positive definite matrices. 
Given a matrix $Q\in\sdo$, 
we set $Q[\vec{x}]=\vec{x}^tQ\vec{x}$ for $\vec{x}\in\R^d$, 
defining a corresponding PQF.
Note that every matrix $Q\in\sdo$ can be decomposed into $Q=A^tA$
with $A\in\gldr$ and therefore $\sdo$ can be identified with
the space $O_d(\R)\backslash \gldr$ of lattice bases up to orthogonal transformations.
Two PQFs (respectively matrices) $Q$ and $Q'$ are called {\em arithmetically equivalent}
(or {\em integrally equivalent})
if there exists a matrix $U\in \gldz$ with $Q'=U^tQU$.
Thus arithmetical equivalence classes of PQFs are in one-to-one correspondence
with similarity classes of lattices.

The {\em arithmetical minimum} $\lambda(Q)$ of a PQF $Q$ is defined by
$$
\lambda(Q) 
=
\min_{\vec{x}\in \Z^d\setminus\{\vec{0}\}} Q[\vec{x}]
.
$$
If $L=A\Z^d$ with $A\in\gldr$ satisfying $Q=A^tA$ is a corresponding lattice,
there is an immediate relation to the packing radius of $L$:
We have $\lambda(Q)=(2\lambda(L))^2$ and therefore
$$
\delta(L)= \MH(Q)^{d/2} \frac{\vol B^d}{2^d},
$$
where 
$$
\MH(Q) = \frac{\lambda(Q)}{(\det Q)^{1/d}}
$$
is the so-called {\em Hermite invariant} of~$Q$.
Note that $\MH(\cdot)$ is invariant with respect to scaling.
A classical problem in the arithmetic theory of quadratic forms is the 
determination of the {\em Hermite constant} 
$$
\MH_d=\sup_{Q\in\sdo} \MH(Q)
.
$$
By the relation described above, it corresponds to determining the
supremum of possible lattice sphere packing densities.
Local maxima of the Hermite invariant on $\sdo$ and corresponding lattices
are called {\em extreme}.

\section{Voronoi's characterization of extreme forms}

\label{sec:ryshkov}

\subsection*{The Ryshkov polyhedron}

Since the Hermite invariant is invariant with respect to scaling,
a natural approach to maximizing it
is to consider all forms with a fixed 
arithmetical minimum, say~$1$, and minimize the determinant
among them.
We may even relax the condition on the arithmetical minimum
and only require that it is at least~$1$.
In other words, we have
$$
\MH_d
=
1 / \inf_{\MNR} (\det Q)^{1/d}
,
$$
where  
\begin{equation}  \label{eqn:ryshkov-polyhedron}
\MNR =
\left\{
Q\in\sdo : \lambda(Q)\geq 1
\right\}
.
\end{equation}
We refer to $\MNR$ as {\em Ryshkov polyhedron},
as it was Ryshkov \cite{ryshkov-1970}
who noticed that this view
on Hermite's constant allows a simplified 
description of Voronoi's theory, to be sketched below.

We denote by $\sd$ the space of real symmetric matrices,
respectively of real quadratic forms in $d$~variables.
It is a Euclidean vector space of dimension $\binom{d+1}{2}$
with the usual inner product defined by
$$
\langle Q, Q' \rangle
=
\sum_{i,j=1}^d q_{ij} q'_{ij}
=
\trace(Q\cdot Q')
.
$$
Because of the fundamental identity
\[
Q[\vec{x}] = \langle Q,\vec{x}\vec{x}^t \rangle 
,
\]
quadratic forms $Q\in \sd$ attaining a fixed value 
on a given $\vec{x}\in\R^d\setminus\{\vec{0}\}$ 
lie all in a {\em hyperplane}
({\em affine subspace of co-dimension~$1$}).
Thus Ryshkov polyhedra $\MNR$ are intersections of 
infinitely many {\em halfspaces}:   
\begin{equation}  \label{eqn:Plambda}
\MNR = 
\{
Q\in\sdo : \langle Q, \vec{x}\vec{x}^t \rangle \geq 1 
\mbox{ for all } \vec{x}\in\Z^d\setminus\{\vec{0}\}
\}
.
\end{equation}

It can be shown that $\MNR$ is ``locally like a polyhedron'',
meaning that any intersection with a {\em polytope} 
(convex hull of finitely many vertices) is itself a polytope. 
For a proof we refer to \cite[Theorem~3.1]{schuermann-2009}.  
As a consequence $\MNR$ has {\em vertices}, {\em edges}, {\em facets} 
and in general {\em $k$-dimensional faces} ({\em $k$-faces}). 
For details on terminology and basic properties of 
polytopes we refer to \cite{ziegler-1998}.

\subsection*{Perfect forms}

The vertices $Q$ of the Ryshkov polyhedron are called {\em perfect forms}. 
Such forms are characterized by the fact that they are 
determined uniquely by their arithmetical minimum (here $1$)
and its representatives 
$$    
\Min Q =\{\vec{x}\in \Z^d : Q[\vec{x}] = \lambda(Q) \}
.
$$
A corresponding lattice is called perfect too.
The following proposition due to Minkowski implies that 
the Hermite constant can only be attained among perfect forms,
i.e., the maximal lattice sphere packing density 
can only be attained by perfect lattices.

\begin{proposition}[Minkowski~\cite{minkowski-1905}]  \label{prop:concave-det}
$(\det Q)^{1/d}$ is a strictly concave function on~$\sdo$.
\end{proposition}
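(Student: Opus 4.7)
The plan is to reduce the statement to a classical inequality between geometric and arithmetic means via simultaneous diagonalization. Concretely, I would first recall the standard fact that for $Q_1,Q_2 \in \sdo$ there exists an invertible $P \in \gldr$ with $P^t Q_1 P = \Id$ and $P^t Q_2 P = D = \diag(d_1,\ldots,d_d)$, with $d_i > 0$. Since $Q \mapsto (\det Q)^{1/d}$ rescales by the common factor $(\det P)^{-2/d}$ under $Q \mapsto P^t Q P$, both sides of the concavity inequality transform by the same positive constant, so it suffices to prove
\[
\bigl(\det(\lambda \Id + (1-\lambda) D)\bigr)^{1/d} \;\geq\; \lambda + (1-\lambda)\bigl(\det D\bigr)^{1/d}
\]
for every $\lambda \in (0,1)$, and to analyze when equality holds.

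The main step is then an AM--GM argument. Write $a_i = \lambda + (1-\lambda) d_i$ for $i=1,\dots,d$, so that $\det(\lambda \Id + (1-\lambda) D) = \prod_i a_i$. Using that each $a_i>0$, divide through by $\bigl(\prod a_i\bigr)^{1/d}$ and rewrite the right-hand side as a sum of two geometric means; this reduces the required inequality to
\[
\left(\prod_{i=1}^d \frac{\lambda}{a_i}\right)^{1/d} + \left(\prod_{i=1}^d \frac{(1-\lambda)\,d_i}{a_i}\right)^{1/d} \;\leq\; 1.
\]
Since $\frac{\lambda}{a_i} + \frac{(1-\lambda)d_i}{a_i} = 1$, a termwise AM--GM bound on each of the two geometric means and adding gives exactly the constant $1$ on the right, which is what is needed.

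Finally I would track the equality case. Strict inequality in AM--GM fails only when all the ratios $\lambda/a_i$ are equal (equivalently, all $d_i$ are equal), i.e.\ when $D = c\Id$ for some $c>0$; translating back through $P$, this says $Q_2 = cQ_1$. Since the function $(\det Q)^{1/d}$ is homogeneous of degree $1$, it is necessarily affine along rays $\R_{>0}\cdot Q_1$, so this is the sharp form of strict concavity on $\sdo$: the inequality is strict whenever $Q_1$ and $Q_2$ are not positive scalar multiples of each other. The only step requiring genuine care is the equality analysis in AM--GM and verifying it corresponds precisely to $Q_1 \propto Q_2$; the rest is a clean reduction followed by a one-line inequality.
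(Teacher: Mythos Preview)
Your argument is correct and is essentially the classical proof of the Minkowski determinant inequality via simultaneous diagonalization and AM--GM. The paper itself does not supply a proof of this proposition; it simply refers to \cite[\S 39.2]{gl-1987}, so there is no in-paper argument to compare against.

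One remark worth recording: you are right to flag the equality case. Since $(\det Q)^{1/d}$ is $1$-homogeneous it is affine along every ray $\R_{>0}\cdot Q$, so the phrase ``strictly concave'' in the proposition is a slight overstatement if read literally. What your argument establishes is the sharp form---strict inequality unless $Q_1$ and $Q_2$ are positive scalar multiples of each other---and this is exactly what the paper uses downstream (for instance, to deduce that the superlevel sets $\{Q\in\sdo:\det Q\geq D\}$ are strictly convex, since two distinct points on the level surface $\det Q = D$ can never be proportional).
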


For a proof see for example \cite[\textsection~39.2]{gl-1987}.
Note, that in contrast to~$(\det Q)^{1/d}$, the function $\det Q$ is not 
a concave function on~$\sdo$ (see \cite{nelson-1974}).
However Minkowski's theorem implies that the set
\begin{equation} \label{eqn:det-greater-equal-D}
\{Q\in \sdo : \det Q \geq D \}
\end{equation}
is strictly convex for $D>0$.

Another property of perfect forms which we use later is the following.

\begin{proposition}  \label{prop:prefect-implies-d-linear-indep}
If $Q\in\sd$ is perfect, then $\Min Q$ spans $\R^d$.
\end{proposition}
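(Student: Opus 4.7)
The plan is to argue by contradiction using the characterization of perfectness given in the preceding paragraph: a form $Q$ is perfect precisely when it is uniquely determined, as an element of $\sd$, by the linear conditions $\langle Q, \vec{x}\vec{x}^t\rangle = \lambda(Q)$ for $\vec{x} \in \Min Q$. I would first reformulate this uniqueness condition in purely linear-algebraic terms: the solution set in $\sd$ of a linear system $\langle Q', M_i\rangle = c_i$ consists of a single point if and only if the matrices $M_i$ span $\sd$ as a real vector space, since otherwise one can add any nonzero element of the orthogonal complement of $\mathrm{span}\{M_i\}$ to any solution to produce a distinct one. Applied here, perfectness of $Q$ is therefore equivalent to the rank-one forms $\{\vec{x}\vec{x}^t : \vec{x} \in \Min Q\}$ spanning $\sd$.

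Next, suppose for contradiction that $\Min Q$ lies in some proper linear subspace $V \subsetneq \R^d$ of dimension $k \leq d-1$. After changing to an orthonormal basis of $\R^d$ adapted to $V$, every $\vec{x} \in \Min Q$ has its last $d-k$ coordinates equal to zero, so each rank-one matrix $\vec{x}\vec{x}^t$ has support contained in the upper-left $k \times k$ block. Consequently the real linear span of $\{\vec{x}\vec{x}^t : \vec{x} \in \Min Q\}$ is contained in the subspace $\{M \in \sd : M_{ij} = 0 \text{ whenever } i > k \text{ or } j > k\}$, whose dimension is $\binom{k+1}{2} < \binom{d+1}{2} = \dim \sd$. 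This contradicts the spanning statement from the previous step.

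I do not anticipate a real obstacle here; the only step with any content is the translation from ``$Q$ is uniquely determined by the equations $\langle Q, \vec{x}\vec{x}^t\rangle = 1$, $\vec{x}\in\Min Q$'' into the spanning condition on the $\vec{x}\vec{x}^t$, after which a trivial dimension count for rank-one matrices supported in a coordinate subspace finishes the argument.
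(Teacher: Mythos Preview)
Your proof is correct and follows essentially the same approach as the paper: both argue that perfectness forces the rank-one forms $\vec{x}\vec{x}^t$ to span $\sd$, and then observe that if $\Min Q$ were contained in a proper subspace of $\R^d$ these rank-one forms could only span a subspace of dimension at most $\binom{d}{2} < \binom{d+1}{2}$. Your version is slightly more explicit about the change-of-basis step, but the argument is the same.
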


The existence of $d$ linear independent vectors in $\Min Q$ for a perfect
form $Q$ follows from the observation that the rank-$1$ forms
$\vec{x}\vec{x}^t$ with $\vec{x}\in\Min Q$ have to span $\sd$,
since they uniquely determine $Q$ through the linear 
equations $\langle Q, \vec{x}\vec{x}^t \rangle = \lambda(Q)$.
If however $\Min Q$ does not span $\R^d$ then these rank-$1$
forms can maximally span a $\binom{d}{2}$-dimensional subspace of $\sd$.

\subsection*{Finiteness up to equivalence}

The arithmetical equivalence operation $Q\mapsto U^t Q U$ of $\gldz$ on $\sdo$ 
leaves $\lambda(Q)$, $\Min Q$ and also $\MNR$ invariant.
In fact, $\gldz$ acts on the sets of faces of a given dimension,
thus in particular on the sets of vertices, edges and facets of 
$\MNR$.
The following theorem shows that 
the Ryshkov polyhedron $\MNR$ contains only finitely 
many arithmetically inequivalent vertices.
By Proposition~\ref{prop:concave-det} this implies in particular 
that $\MH_d$ is actually attained, namely by some perfect forms. 
For a proof we refer to \cite[Theorem~3.4]{schuermann-2009}.

\begin{theorem}[Voronoi \cite{voronoi-1907}] \label{thm:voronoi}
Up to arithmetical equivalence and scaling there exist
only finitely many perfect forms in a given dimension~$d\geq 1$.
\end{theorem}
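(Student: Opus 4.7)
The plan is to combine two ingredients: (i) a perfect form normalized to have arithmetic minimum $1$ has its determinant bounded above by a constant depending only on $d$, and (ii) the $\gldz$-action on $\sdo$ admits a reduction domain inside which the condition $\lambda(Q)=1$ together with a bound on $\det Q$ cuts out a compact region. Once these are in place, local polyhedrality of $\MNR$ yields finiteness.

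First I would establish the determinant bound. By Proposition~\ref{prop:prefect-implies-d-linear-indep}, a perfect form $Q$ normalized by $\lambda(Q)=1$ admits $d$ linearly independent vectors $\vec{v}_1,\ldots,\vec{v}_d \in \Min Q$. Form the integer matrix $V=(\vec{v}_1,\ldots,\vec{v}_d)\in\gldq$. The positive definite matrix $V^t Q V$ has every diagonal entry equal to $Q[\vec{v}_i]=1$, so Hadamard's inequality gives $\det(V^t Q V)\leq 1$. Since $\det(V^t Q V)=(\det V)^2\det Q$ and $|\det V|$ is a positive integer, we conclude $\det Q \leq 1$.

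Next I would invoke Minkowski reduction theory: there is a closed fundamental set $F\subset\sdo$ for the $\gldz$-action by $Q\mapsto U^tQU$ such that every $Q\in F$ is Minkowski-reduced. Reduced forms satisfy $\lambda(Q)=q_{11}\leq q_{22}\leq\cdots\leq q_{dd}$, a universal estimate $q_{11}q_{22}\cdots q_{dd}\leq c_d\det Q$ for a constant $c_d$ depending only on $d$, and off-diagonal entries controlled by the diagonal ones ($|q_{ij}|\leq q_{ii}/2$). Combining these reduction inequalities with the normalization $\lambda(Q)=1$ and the bound $\det Q\leq 1$ obtained above forces $1\leq q_{ii}\leq c_d$ for all $i$ and hence also bounds $|q_{ij}|$. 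Therefore the set
\[
K \;=\; F \cap \MNR \cap \{Q\in\sdo : \lambda(Q)=1,\ \det Q\leq 1\}
\]
is bounded and closed in $\sdo$, hence compact.

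Finally, every perfect form is $\gldz$-equivalent (after scaling so that $\lambda(Q)=1$) to a reduced representative, which by the previous step lies in $K$. Since $\MNR$ is locally like a polyhedron, its intersection with any polytope containing $K$ is itself a polytope, and so $\MNR\cap K$ contains only finitely many vertices of $\MNR$, i.e., only finitely many perfect forms. This gives finitely many perfect forms up to arithmetical equivalence and scaling. The main obstacle is the quantitative Minkowski reduction estimate $q_{11}\cdots q_{dd}\leq c_d\det Q$: it is classical but genuinely the technical heart of the argument, and is the point where I would either appeal carefully to the reduction-theory literature or provide the inductive proof using successive minima.
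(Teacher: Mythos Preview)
The paper does not actually prove Theorem~\ref{thm:voronoi}; it simply refers the reader to \cite[Theorem~3.4]{schuermann-2009}. Your outline is the classical argument and is essentially the one given in that reference: bound $\det Q$ for perfect $Q$ with $\lambda(Q)=1$, use Minkowski reduction to trap a representative of each $\gldz$-class in a bounded region, and then invoke the local polyhedrality of $\MNR$ to conclude that only finitely many vertices can occur there. So your approach is correct and standard.

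Two small remarks. First, the sentence ``bounded and closed in $\sdo$, hence compact'' is not literally valid, since $\sdo$ is open in $\sd$; you should instead observe that $\MNR$ is closed in $\sd$ (a positive semidefinite but singular $Q$ cannot satisfy $Q[\vec{x}]\geq 1$ for all nonzero $\vec{x}\in\Z^d$), so that $K$ is closed and bounded in the ambient Euclidean space $\sd$ and therefore compact. Second, your honest flagging of the estimate $q_{11}\cdots q_{dd}\leq c_d\det Q$ as the technical core is exactly right; this is Minkowski's second finiteness inequality for reduced forms, and a complete write-up would either cite it precisely or supply the inductive proof via successive minima.
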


Thus the classification of perfect forms in a given dimension, 
respectively the enumeration of vertices of the Ryshkov polyhedron 
up to arithmetical equivalence, yields the Hermite constant.
Perfect forms have been classified up to dimension~$8$
(see \cite{dsv-2007b}).

\subsection*{Characterization of extreme forms}

From dimension~$6$ onwards
not every perfect form is extreme (see \cite{martinet-2003}). 
In order to characterize extreme forms within the set of perfect forms
the notion of {\em eutaxy} is used:
A PQF $Q$ is called {\em eutactic}   
if its inverse~$Q^{-1}$ is contained in the (relative) interior 
$\relint \MV(Q)$ of its {\em Voronoi domain}
$$
\MV(Q)
= \cone \{ \vec{x}\vec{x}^t : \vec{x}\in\Min Q \}
.
$$
Here $\cone M $ denotes the {\em conic hull}
$$
\left\{
\sum_{i=1}^n \alpha_i \vec{x}_i : n\in \N \mbox{ and } \vec{x}_i \in M, \alpha_i\geq 0 \mbox{ for } i=1,\ldots, n
\right\}
$$
of a set $M$.
Note that the Voronoi domain is full-dimensional (of dimension $\binom{d+1}{2}$)
if and only if $Q$ is perfect.
Note also that the rank-$1$ forms $\vec{x}\vec{x}^t$ give
inequalities $\langle Q, \vec{x}\vec{x}^t \rangle \geq 1$ 
defining the Ryshkov polyhedron and by this the 
Voronoi domain of $Q$ is equal to the {\em normal cone}
\begin{equation} \label{eqn:normal-cone}
\{N \in \sd : \langle N, Q/\lambda(Q) \rangle \leq \langle N , Q' \rangle \mbox{ for all } Q'\in \MNR \}
\end{equation}
of $\MNR$ at its boundary point $Q/\lambda(Q)$.

Algebraically the eutaxy condition $Q^{-1} \in \relint \MV(Q)$
is equivalent to the existence
of positive $\alpha_{\vec{x}}$ with 
\begin{equation} \label{eqn:eutaxy-algebraic}
Q^{-1} = \sum_{\vec{x}\in\Min Q} \alpha_{\vec{x}} \vec{x}\vec{x}^t
.
\end{equation} 
Thus computationally eutaxy of~$Q$ can be tested by solving the {\em linear program}  
\begin{equation} \label{eqn:eutaxy-lp}
\max \alpha_{\min}
\quad \mbox{such that $\alpha_{\vec{x}}\geq \alpha_{\min}$ and \eqref{eqn:eutaxy-algebraic} holds.}
\end{equation}
The form $Q$ is eutactic if and only if 
the maximum is greater $0$.

Voronoi \cite{voronoi-1907} showed that perfection together with
eutaxy implies extremality and vice versa:

\begin{theorem}[Voronoi \cite{voronoi-1907}]   \label{thm:voronoi-packing}
A PQF $Q\in\sdo$ is extreme if and only if $Q$ is perfect
and eutactic.
\end{theorem}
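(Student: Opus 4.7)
The plan is to normalize $\lambda(Q)=1$, so that $Q\in\bd\MNR$, and translate extremeness of $Q$ into the condition that $Q$ is a strict local minimum of $\det$ on $\MNR$. Indeed, scale invariance of $\MH$ together with $\lambda(Q')\ge 1$ on $\MNR$ (and the fact that $cQ\in\MNR$ forces $c\ge 1$, with $\det(cQ)\ge\det Q$) reduces the whole theorem to showing: $\det Q'>\det Q$ for every $Q'\in\MNR$ near $Q$ with $Q'\ne Q$ if and only if $Q$ is perfect and eutactic. I will analyze this with two tools from the excerpt: the first-order expansion
\[
\det(Q+D) \;=\; \det(Q)\,\bigl(1+\langle Q^{-1},D\rangle + O(\|D\|^2)\bigr),
\]
and the strict concavity of $Q\mapsto(\det Q)^{1/d}$ from Proposition~\ref{prop:concave-det}. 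By \eqref{eqn:normal-cone}, $\MV(Q)$ is the normal cone to $\MNR$ at $Q$; dually, letting $T_Q$ denote the cone of feasible directions (those $D$ with $Q+tD\in\MNR$ for small $t>0$), one has $Q^{-1}\in\MV(Q)$ exactly when $\langle Q^{-1},D\rangle\ge 0$ for every $D\in T_Q$.

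\textbf{Extreme $\Rightarrow$ perfect and eutactic.} If $Q$ were not a vertex of $\MNR$, some nonzero $D\in\sd$ would satisfy $Q\pm tD\in\MNR$ for all small $t>0$. Such $D$ cannot be a scalar multiple of $Q$, because otherwise one of $\lambda(Q\pm tD)=(1\pm ct)^2$ drops below $1$. Strict concavity of $(\det\,\cdot\,)^{1/d}$ on the segment $[Q-tD,\,Q+tD]$, with midpoint $Q$, would then force $\min\{\det(Q+tD),\,\det(Q-tD)\}<\det Q$, contradicting local minimality. Thus $Q$ is perfect. For eutaxy, if $Q^{-1}\notin\MV(Q)$ there is $D\in T_Q$ with $\langle Q^{-1},D\rangle<0$, and the linear term drives $\det(Q+tD)<\det Q$. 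If $Q^{-1}\in\bd\MV(Q)$, one can pick a nonzero $D\in T_Q$ with $\langle Q^{-1},D\rangle=0$; since $\langle Q^{-1},Q\rangle=d\ne 0$, this $D$ is not proportional to $Q$, so strict concavity of $(\det\,\cdot\,)^{1/d}$ along $t\mapsto Q+tD$ again yields $\det(Q+tD)<\det Q$. Both cases contradict the local minimum, so $Q^{-1}\in\relint\MV(Q)$.

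\textbf{Perfect and eutactic $\Rightarrow$ extreme.} Perfection makes $T_Q$ a pointed full-dimensional cone and $\MV(Q)$ full-dimensional. Eutaxy means $Q^{-1}\in\interior\MV(Q)$, i.e.\ $\langle Q^{-1},D\rangle>0$ for every nonzero $D\in T_Q$. Compactness of $T_Q\cap S$ (with $S$ the unit sphere in $\sd$) then upgrades this to a uniform bound $\langle Q^{-1},D\rangle\ge c\|D\|$ with $c>0$. For any $Q'=Q+D\in\MNR$ with $D\ne 0$ small, $D$ lies in (or is arbitrarily close to) $T_Q$, and the first-order term dominates the $O(\|D\|^2)$ remainder, giving $\det Q'>\det Q$ and hence a strict local minimum.

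\textbf{Main obstacle.} The subtle step is the borderline case $\langle Q^{-1},D\rangle=0$ in the forward direction: the linear expansion is silent there, and only Minkowski's strict concavity of $(\det\,\cdot\,)^{1/d}$ supplies the required strict inequality. The same tool handles the non-perfect case symmetrically, so the proof essentially rests on Proposition~\ref{prop:concave-det}, supplemented by the polyhedral dictionary between $\MV(Q)$ and $T_Q$ provided by \eqref{eqn:normal-cone}.
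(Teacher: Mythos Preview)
Your argument is correct and follows essentially the same route as the paper: both proofs rest on the gradient identity $\grad\det Q=(\det Q)Q^{-1}$, Minkowski's strict concavity (Proposition~\ref{prop:concave-det}), and the identification of $\MV(Q)$ with the normal cone of $\MNR$ at $Q$; the paper packages this geometrically via the tangent hyperplane to the determinant-$\det Q$ surface, while you phrase it analytically through the dual tangent cone $T_Q$ and a compactness bound. One harmless slip: $\lambda((1\pm ct)Q)=(1\pm ct)$, not $(1\pm ct)^2$, since $\lambda$ scales linearly in $Q$.
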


We here give a proof providing a geometrical
viewpoint that turns out to be quite useful for the 
intended generalization discussed in the following sections.

\begin{proof}
The function~$\det Q$ is a positive real valued polynomial on $\sd$,
depending on the $\binom{d+1}{2}$ different coefficients $q_{ij}$ of $Q$. 
Using the expansion theorem we obtain 
$$
\det Q = \sum_{i=1}^d q_{ji}^\# q_{ij}
$$
for any fixed column index $j\in\{ 1,\dots,d \}$.
Here, $q_{ij}^\# = (-1)^{i+j} \det Q_{ij}$ 
(with $Q_{ij}$ the minor matrix of $Q$, obtained by removing row~$i$ and column~$j$)
denote the coefficients of
the {\em adjoint form} 
$Q^\#=(\det Q)Q^{-1}\in\sdo$ of $Q$.
Thus 
\begin{equation}  \label{eqn:gradient-det}
\grad \det Q = (\det Q)Q^{-1}
\end{equation}
and the tangent hyperplane $T$ in $Q$
of the smooth {\em determinant-$\det Q$-surface} 
$$
S = 
\{
Q'\in\sdo : \det Q' = \det Q
\}
$$
is given by
$$
T = 
\{
Q'\in\sd :
\langle Q^{-1},Q'\rangle
=
\langle Q^{-1},Q \rangle
\}
.
$$

Or in other words,
$Q^{-1}$ is a normal vector of 
the tangent plane $T$ of $S$ at $Q$. 
By Proposition~\ref{prop:concave-det}
and the observation that~\eqref{eqn:det-greater-equal-D} is convex,
we know that $S$ is contained in the halfspace
\begin{equation} \label{eqn:tangent-plane}
\{Q'\in\sd : \langle Q^{-1} , Q'-Q \rangle \geq 0 \}
,
\end{equation}
with $Q$ being the unique intersection point of $S$ and $T$.

As a consequence,
a perfect form $Q$ attains a local minimum of $\det Q$
(hence is extreme) if and only if the halfspace \eqref{eqn:tangent-plane}
contains the Ryshkov polyhedron $\MNR$, and its boundary meets $\MNR$
only in $Q$. This is easily seen to be equivalent to the condition that
the normal cone (Voronoi domain) $\MV(Q)$ of $\MNR$ at $Q$
contains $Q^{-1}$ in its interior.
\end{proof}

Note that eutaxy alone does not suffice for extremality.
However, there exist only finitely many eutactic forms in every 
dimension and they can (in principle) be enumerated too (see \cite[Section~9.5]{martinet-2003}).
Nevertheless, this seems computationally more difficult
than the enumeration of perfect forms
(see \cite{stogrin-1974}, \cite{bm-1996}, \cite{batut-2001}, \cite{egs-2002}).
By the geometry of~$S$ and~$T$ a eutactic form attains always a unique
minimum of~$\delta$ (maximum of $\det$) on its face of the Ryshkov polyhedron.
However, not all faces of the Ryshkov polyhedron contain a eutactic form.

\section{Parameter spaces for periodic sets}

\label{sec:periodic-parameter-space}

We want to study the more general situation of 
periodic sphere packings.
Recall from \eqref{eqn:periodic-set} that a periodic set
with $m$~lattice translates (an {\em $m$-periodic set}) 
in $\R^d$ is of the form 
\begin{equation}  \label{eqn:periodic-set-2}
\Lambda' = \bigcup_{i=1}^{m} \left( \vec{t}'_i + L \right)
,
\end{equation}
with a lattice $L\subset \R^d$
and translation vectors $\vec{t}'_i\in \R^d$, $i=1,\ldots ,m$.

We want to work with a parameter space for $m$-periodic sets similar to $\sdo$ for
lattices. For this, we consider $\Lambda'$ as 
a linear image $\Lambda'=A\Lambda_{\vec{t}}$ of a {\em standard periodic set}
\begin{equation}  \label{eqn:standard-periodic-set-def}
\Lambda_{\vec{t}}
=
\bigcup_{i=1}^{m} \left( \vec{t}_i + \Z^d \right)
.
\end{equation}
Here, $A\in\gldr$ satisfies in particular $L=A\Z^d$.
Since we are only interested in properties of 
periodic sets up to isometries, 
we encode $\Lambda'$ by $Q=A^tA\in\sdo$, 
together with the $m$ translation vectors $\vec{t}_1,\ldots, \vec{t}_m$.
Since every property of periodic sets we deal with here
is invariant up to translations, we may assume without loss of
generality that $\vec{t}_m=\vec{0}$.
Thus we consider the parameter space
\begin{equation}  \label{eqn:def-sdmo}
\sdmo = \sdo \times \R^{d\times (m-1)}
\end{equation}
for $m$-periodic sets (up to isometries).
We hereby in particular generalize the space 
$\MS^{d,1}_{>0}=\MS^d_{>0}$ in a natural way.
We call the elements of $\sdmo$ {\em periodic forms}  
and denote them usually by $X=(Q,\vec{t})$,
where $Q\in\sdo$ and 
$$
\vec{t}=(\vec{t}_1,\ldots,\vec{t}_{m-1})\in \R^{d\times (m-1)}
$$ 
is a real valued matrix containing $m-1$ columns with vectors $\vec{t}_i\in\R^d$.
One should keep in mind: although we omit $\vec{t}_m=\vec{0}$,
we implicitly keep it as a translation vector.
Note that a periodic set $\Lambda'$ as in \eqref{eqn:periodic-set-2}
has many {\em representations}
by periodic forms. In particular, $m$ may vary and we have different choices for $A$.
A similar approach for periodic sets in dimension~$3$ has been considered in~\cite{pz-1998}.

The parameter space $\sdmo$ is contained in the space 
\begin{equation}  \label{eqn:def-sdm}
\sdm = \sd \times \R^{d\times (m-1)}.
\end{equation}
It can be turned into a Euclidean space  
with inner product $\langle \cdot , \cdot \rangle$, defined for $X=(Q,\vec{t})$ and $X'=(Q',\vec{t}')$ by
$$   \label{not:inner_product_sdm}
\langle X, X'\rangle = \langle Q, Q' \rangle + \sum_{i=1}^{m-1} \vec{t}_i^t \vec{t}'_i
. 
$$
Note, for the sake of simplicity 
we use the same symbol for the inner products on all spaces $\sdm$.

We extend the definition of the arithmetical minimum $\lambda$, 
by defining the {\em generalized arithmetical minimum} 
$$   \label{not:arith_min_periodic_sets}
\lambda(X) 
=
\min \{ 
Q[\vec{t}_i-\vec{t}_j-\vec{v}] : 
1\leq i,j \leq m \mbox{ and } \vec{v}\in \Z^d, \mbox{ with } \vec{v}\not=\vec{0} \mbox{ if } i=j \}
$$
for the periodic form~$X=(Q,\vec{t})\in\sdmo$.
Note that we have $\lambda(X)=0$ in the case of 
intersecting lattice translates 
$(\vec{t}_i+\Z^d) \cap (\vec{t}_j+\Z^d) \not= \emptyset$
with $i\not=j$.
The set of {\em representations of the generalized arithmetical minimum} 
$\Min X$ is the set of all $\vec{w}=\vec{t}_i-\vec{t}_j-\vec{v}$    \label{not:Min-X}
attaining $\lambda(X)$.
Computationally, $\Min X$ and $\lambda(X)$ can be obtained by solving
a sequence of {\em closest vector problems}  
(CVPs), one for each pair $i,j$ with $i\not= j$. 
In addition one shortest vector problem (SVP) has to be solved, 
taking care of the cases where $i=j$. 
Implementations of algorithms solving CVPs and SVPs are provided for example
in {\tt MAGMA} \cite{magma} or {\tt GAP} \cite{gap}.

In order to define the sphere packing density function
$\delta:\sdmo \to \R$
we set $\det X = \det Q$ 
for periodic forms~$X=(Q,\vec{t})$. 
Then 
\begin{equation}  \label{eqn:periodic-delta}
\delta(X)
=
\left(\frac{\lambda (X)}{(\det X)^{1/d}}\right)^{\frac{d}{2}} m \vol B^d / 2^d
.
\end{equation}
In analogy to the lattice case, we 
call a periodic form~$X\in\sdmo$ \ {\em $m$-extreme}
if it attains a local maximum of~$\delta$ within~$\sdmo$.

The relation~\eqref{eqn:periodic-delta} shows that
the supremum of~$\delta$ among $m$-periodic sphere packings 
is up to some power and a constant factor equal to 
the ``Hermite like constant''
$$
\sup_{X\in\sdmo} \lambda(X) / (\det X)^{1/d} 
=
1 / \inf_{X\in\MNR_{m}} (\det X)^{1/d}
,
$$
where the set $\MNR_{m}$ on the right side
is the {\em (generalized) Ryshkov set}
\begin{equation}   \label{eqn:def-Pmlambda}
\MNR_{m}
=
\left\{
X\in\sdmo : \lambda(X)\geq 1
\right\}
.
\end{equation}

The condition $\lambda(X)\geq 1$ gives
infinitely many linear inequalities
$$
p_{\vec{v}}(X) = Q[\vec{v}] = \langle X , (\vec{v}\vec{v}^t,0) \rangle \geq 1
$$
for $\vec{v}\in\Z^d\setminus\{\vec{0}\}$,  
as in the case $m=1$. For $m>1$ we additionally have the 
infinitely many polynomial inequalities 
\begin{equation} \label{eqn:def-pijv}
p_{i,j,\vec{v}}(X)
=
Q[\vec{t}_i-\vec{t}_j-\vec{v}] 
\geq 1
,
\end{equation}
where $i,j\in\{1,\ldots, m\}$ with $i\not=j$ and $\vec{v}\in\Z^d$. 
These polynomials are
of degree~$3$ in the parameters $q_{kl}$, $t_{kl}$ of~$X$.
Note that they are linear for a fixed~$\vec{t}$.
Observe also that $p_{i,m,\vec{v}}$ and $p_{m,j,\vec{v}}$ are special 
due to our assumption $\vec{t}_m=\vec{0}$
and that there is a symmetry
$p_{i,j,\vec{v}}=p_{j,i,-\vec{v}}$ by which we may
restrict our attention to polynomials with $i\leq j$.
For $i=j$ we have the linear function $p_{i,j,\vec{v}}=p_{\vec{v}}$.

\section{Local analysis of periodic sphere packings}

\label{sec:local-analysis}

\subsection*{Characterizing local optima}

Before we generalize perfection and eutaxy to a notion of 
{\em $m$-perfection} 
and
{\em $m$-eutaxy}  
(in order to obtain
a sufficient condition for a periodic form to be $m$-extreme from it)
we discuss a rather general setting:
Assume we want to minimize a smooth function
on a {\em basic closed semialgebraic set},  
that is, on a region which is described
by finitely many (non-strict) polynomial inequalities.
Let $E$ denote a Euclidean space with inner product 
$\langle \cdot , \cdot \rangle$. Further, 
let $f:E\to \R$ be {\em smooth} 
(infinitely differentiable) 
and $g_1,\ldots, g_k$ be (real valued) polynomials on~$E$.
Assume we want to determine whether or not we have a local minimum
of $f$ at $X_0$ on the boundary of 
\begin{equation} \label{eqn:setG-definition}
G=\{ X \in E : g_i(X) \geq 0 \mbox{ for } i=1,\ldots,k \}
.
\end{equation}

For simplicity, we further assume $(\grad f)(X_0) \not= 0$ 
and $g_i(X_0)=0$, as well as $(\grad g_i)(X_0)\not= 0$, for $i=1,\dots, k$.
Then, in a sufficiently small neighborhood of $X_0$,
the function $f$ as well as the polynomials $g_i$ can be approximated arbitrarily 
close by corresponding affine functions.
For example, $f$ is approximated by the {\em beginning of its Taylor series} 
$$
f(X_0) + \langle (\grad f)(X_0) , X-X_0 \rangle  
.
$$
From this one easily derives the following well-known 
criterion (see for example \cite[Theorem~4.2.2]{bss-1993}) 
for an isolated local minimum of $f$ at $X_0$,
depending on the normal cone 
$$\MV(X_0)=\cone \{ (\grad g_i)(X_0) : i=1,\ldots,k \}.$$
The function $f$ attains an isolated local minimum on $G$ if
\begin{equation}  \label{eqn:grad-criterion}
(\grad f)(X_0) \in \interior \MV(X_0)
,
\end{equation}
and $f$ does not attain a local minimum if
\begin{equation}  \label{eqn:grad-criterion2}
(\grad f)(X_0) \not\in \MV(X_0)
.
\end{equation}
The behavior in the case $(\grad f)(X_0)\in \bd \cone \MV(X_0)$
depends on the involved functions $f$ and $g_i$ and has to be treated 
depending on the specific problem.

For the lattice sphere packing problem
we have $E=\sd$ and $f=\det^{1/d}$. For $Q_0\in\sdo$ 
we set $g_i(Q)=Q[\vec{v}_i]-\lambda(Q_0)$ with 
$(\grad g_i)(Q)= \vec{v}_i\vec{v}_i^t$
for each pair $\pm \vec{v}_i$ 
in $\Min Q_0$.
By Theorem \ref{thm:voronoi-packing} we have
a local minimum of $f(Q)=(\det Q)^{1/d}$ at $Q_0$
on $G$ (as in \eqref{eqn:setG-definition})
if and only if $Q_0$ is perfect and eutactic,
respectively if $\MV(Q_0)$ is full-dimensional 
and $(\grad f)(Q_0)\in \interior \MV(Q_0)$.
Here, $(\grad f)(Q_0)$ is a positive multiple of~$Q_0^{-1}$.
Thus in this special case (due to Proposition~\ref{prop:concave-det})
we do not have a local minimum of~$f$ where
$(\grad f)(Q_0)\in \bd \cone \MV(Q_0)$.

Let us consider the case of $m$-periodic sets, hence of $E=\sdm$ with $m>1$.
We want to know if a periodic form~$X_0\in\sdmo$ attains a local  
minimum of $f=\det^{1/d}$.
We may assume $\lambda(X_0) > 0$. The set $\Min X_0$ is finite
and moreover, for $X=(Q,\vec{t})$ in a small neighborhood of 
$X_0=(Q_0,\vec{t}^0)$, every $\vec{t}_i-\vec{t}_j-\vec{v}\in\Min X$
corresponds to a $\vec{t}^0_i-\vec{t}^0_j-\vec{v}\in\Min X_0$.
Thus locally at $X_0$, the generalized Ryshkov set $\MNR_{m}$
is given by the basic closed semialgebraic set~$G$ 
defined by the inequalities $p_{i,j,\vec{v}}(X)-\lambda(X_0)\geq 0$,  
one for each pair $\pm (\vec{t}^0_i-\vec{t}^0_j-\vec{v})$ 
in $\Min X_0$. 
As explained in Section~\ref{sec:periodic-parameter-space}, 
we may assume $1\leq i\leq j\leq m$ and $\vec{t}^0_j=\vec{0}$ if $j=m$.
An elementary calculation yields 
\begin{equation}   \label{eqn:pijv-gradient}
(\grad p_{i,j,\vec{v}})(X) = 
(
\vec{w}\vec{w}^t,
\vec{0},\dots,\vec{0},
2Q\vec{w},
\vec{0},\dots,\vec{0},
-2Q\vec{w},
\vec{0},\dots,\vec{0}
),
\end{equation}
where we set $X=(Q,\vec{t})$ and use
$\vec{w}$ to abbreviate $\vec{t}_i-\vec{t}_j-\vec{v}$.
This is to be understood as a vector in $\sdm=\sd\times \R^{d\times (m-1)}$,
with its ``$\sd$-component'' being the rank-$1$ form $\vec{w}\vec{w}^t$
and its ``translational-component'' containing the zero-vector~$\vec{0}$ 
in all but the $i$th and $j$th column.
If $j=m$, the $j$th column is omitted and if $i=j$
the corresponding column is $\vec{0}$.
For $(\grad f)(X)$ we obtain a positive multiple of $(Q^{-1},\vec{0})$.

\subsection*{A sufficient condition for local \texorpdfstring{$\mathbf{m}$}{m}-periodic sphere packing optima}

Generalizing the notion of perfection,
we say a periodic form~$X=(Q,\vec{t})\in \sdmo$ 
(and a corresponding periodic set represented by $X$)
is {\em $m$-perfect} if the {\em generalized Voronoi domain}
\begin{equation}  \label{eqn:periodic-voronoi-domain}
\MV(X)
= \cone
\{(\grad p_{i,j,\vec{v}})(X) : \vec{t}_i-\vec{t}_j-\vec{v}\in \Min X 
    \mbox{ for some } \vec{v}\in\Z^d \}
\end{equation}
is full-dimensional, that is, if $\dim \MV(X)= \dim \sdm = \binom{d+1}{2} + (m-1)d$. 
Generalizing the notion of eutaxy,
we say that $X$ (and a corresponding periodic set)
is {\em $m$-eutactic} if 
$$
(Q^{-1},\vec{0})\in\relint \MV(X).
$$
So the general discussion at the beginning of this section
yields the following sufficient condition
for a periodic form~$X$ to be {\em isolated $m$-extreme}, 
that is, for $X$ having the property that any sufficiently small change 
which preserves~$\lambda(X)$, necessarily lowers~$\delta(X)$.

\begin{theorem}  \label{thm:m-extreme-characterization}
If a periodic form~$X\in \sdmo$ is $m$-perfect and $m$-eutactic, then
$X$ is isolated $m$-extreme. 
\end{theorem}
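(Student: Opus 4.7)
The plan is to derive the conclusion as a direct consequence of the general gradient criterion for isolated local minima on a basic closed semialgebraic set, which has already been assembled in this section: a smooth $f$ attains an isolated local minimum on a set cut out by finitely many polynomial inequalities $g_i \geq 0$ at a point $X_0$ on the boundary whenever $(\grad f)(X_0)$ lies in the interior of $\cone\{(\grad g_i)(X_0)\}$. Write $X_0 = (Q_0, \vec{t}^0)$. Since $\delta$ depends on $X$ only through the scale-invariant ratio $\lambda(X)/(\det X)^{1/d}$, a $\lambda$-preserving perturbation of $X_0$ strictly decreases $\delta$ precisely when it strictly increases $f(X) = (\det Q)^{1/d}$; hence it suffices to show that $f$ attains an isolated local minimum at $X_0$ on the generalized Ryshkov set $\MNR_{m}$.

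The first genuine step is to give a local semialgebraic description of $\MNR_{m}$ near $X_0$. Because $\lambda(X_0) > 0$, the set $\Min X_0$ is finite; any pair $(i,j,\vec{v})$ with $p_{i,j,\vec{v}}(X_0) > \lambda(X_0)$ retains a strict inequality on an open neighborhood $U$ of $X_0$ by continuity, while pairs with $\|\vec{v}\|$ large are uniformly excluded by the positive definiteness of $Q_0$. Thus, shrinking $U$ if necessary,
\[
\MNR_{m} \cap U \;=\; G \cap U, \qquad G = \{\,X \in \sdmo : p_{i,j,\vec{v}}(X) \geq \lambda(X_0) \text{ for every } \vec{t}^0_i - \vec{t}^0_j - \vec{v} \in \Min X_0\,\}.
\]
By the formula for $(\grad p_{i,j,\vec{v}})(X_0)$ in the preceding subsection, each of these gradients is nonzero since every representative $\vec{w} = \vec{t}^0_i - \vec{t}^0_j - \vec{v}$ is nonzero, and $(\grad f)(X_0)$ is a positive scalar multiple of $(Q_0^{-1}, \vec{0})$. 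So the regularity hypotheses of the criterion are satisfied, and the normal cone of $G$ at $X_0$ is exactly the generalized Voronoi domain $\MV(X_0)$.

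It remains to verify the gradient condition. By $m$-perfection, $\MV(X_0)$ is full-dimensional, so $\interior \MV(X_0) = \relint \MV(X_0)$; by $m$-eutaxy, $(Q_0^{-1}, \vec{0}) \in \relint \MV(X_0)$, and hence $(\grad f)(X_0) \in \interior \MV(X_0)$. The general criterion now yields an isolated local minimum of $f$ at $X_0$ on $G$, and therefore on $\MNR_{m}$, which is precisely the assertion that $X_0$ is isolated $m$-extreme. I expect the principal obstacle to be the second step, the local semialgebraic model: unlike the lattice case, the constraints $p_{i,j,\vec{v}}$ are cubic in the joint parameters $(Q,\vec{t})$, and one must argue carefully that on a small neighborhood of $X_0$ no new minimum representatives can appear and that higher-order terms in the Taylor expansion of the $p_{i,j,\vec{v}}$ are swamped by the strict interiority of $(\grad f)(X_0)$ in $\MV(X_0)$. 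Once this local polyhedral picture is in hand, the conclusion is a formal application of the criterion already established.
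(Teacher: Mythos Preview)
Your proposal is correct and follows essentially the same approach as the paper. Indeed, the paper does not give a separate proof of this theorem at all: it simply states that ``the general discussion at the beginning of this section yields the following sufficient condition,'' and your write-up is a faithful expansion of that discussion---the local semialgebraic model of $\MNR_m$ via the finitely many $p_{i,j,\vec{v}}$ active at $X_0$, the identification of the normal cone with $\MV(X_0)$, and the application of the gradient criterion~\eqref{eqn:grad-criterion}.

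One remark: your final paragraph overstates the difficulty. The general criterion you invoke (cited from \cite{bss-1993}) already applies to arbitrary smooth $g_i$, not just linear ones, so the cubic nature of the $p_{i,j,\vec{v}}$ poses no additional obstacle once $(\grad f)(X_0)\in\interior\MV(X_0)$ is established; the higher-order terms are automatically dominated near $X_0$ by the first-order gap. The paper likewise treats this as routine and does not dwell on it.
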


Note that the theorem gives a computational tool to certify
isolated $m$-extremeness of a given periodic form~$X=(Q,\vec{t})\in \sdmo$:
First, we compute $\Min X$
and use equation~\eqref{eqn:pijv-gradient} 
to obtain generators of the generalized Voronoi domain $\MV(X)$.
From the generators it can be easily checked if the domain is full-dimensional, hence if $X$ is $m$-perfect.
Next, we can computationally test whether
$(Q^{-1},\vec{0})$ is in $\MV(X)$ or not; 
for example by solving a linear program similar to~\eqref{eqn:eutaxy-lp}.

If we find $(Q^{-1},\vec{0}) \in \relint \MV(X)$ (or equivalently in $\interior \MV(X)$ as $\MV(X)$ is assumed to be full-dimensional),
the periodic form~$X$ represents an isolated $m$-extreme periodic set.
If $(Q^{-1},\vec{0})\not\in\MV(X)$, 
the periodic form~$X$ does not represent an $m$-extreme periodic set.
In this situation, we can even 
find a ``direction'' $N\in\sdm$, for which we can improve 
the sphere packing density of the periodic form~$X$,
that is, such that $\delta(X+\epsilon N)>\delta(X)$ for
all sufficiently small $\epsilon>0$.

\begin{remark}
Let $X\in\sdmo$ with $(Q^{-1},\vec{0})\not\in\MV(X)$.
Then we can improve the sphere packing density of~$X$
in direction~$N$ given by the nearest point to 
$-(Q^{-1},\vec{0})$ in the polyhedral cone
\begin{equation}  \label{eqn:linear-cone-approx}
\MP(X)=
\{ N \in \sdm : \langle V, N \rangle \geq 0 \, \mbox{for all} \, V\in \MV(X) \}
.
\end{equation}
\end{remark}

Note that the cone~$\MP(X)$ is dual to the generalized Voronoi domain~$\MV(X)$
and (added to~$X$) gives locally a linear approximation of
the generalized Ryshkov set~$\MNR_{m}$. 

%

\subsection*{Fluid diamond packings}

For general $m$ we are confronted with a difficulty 
which does not show up in the lattice case $m=1$:
There may be non-isolated $m$-extreme sets, which are not $m$-perfect.
The {\em fluid diamond packings} 
in dimension $9$, described by
Conway and Sloane in \cite{cs-1995}, 
give such an example.

\begin{example} 
The {\em root lattice $\mathsf{D}_d$}  
can be defined by 
$$
\mathsf{D}_d = \{ \vec{x}\in \Z^d : \sum_{i=1}^d x_i \equiv 0 \!\! \mod 2 \}
.
$$
The {\em fluid diamond packings} are $2$-periodic sets
$$
\mathsf{D}_9\langle \vec{t} \rangle = \mathsf{D}_9 \cup (\mathsf{D}_9+\vec{t}) 
$$
with $\vec{t}\in\R^9$ such that the minimal distance among elements is 
equal to the minimum distance $\sqrt{2}$ of $\mathsf{D}_9$ itself.
We may choose for example~$\vec{t} = \vec{t}_{\alpha}=(\tfrac{1}{2},\ldots,\tfrac{1}{2},\alpha)^t$
with any $\alpha \in \R$.
For integers~$\alpha$ we obtain
the densest known packing lattice $\Lambda_9=\mathsf{D}_9\langle \vec{t}_{\alpha} \rangle$
in dimension $9$, showing that it is part of a family of 
uncountably many equally dense $2$-periodic sets.

The sets $\mathsf{D}_9\langle \vec{t}_{\alpha} \rangle$ give examples 
of non-isolated $2$-extreme sets, which are
$2$-eutactic, but not $2$-perfect. 
In order to see this, let us consider a representation 
$X_{\alpha}\in\MS^{9,2}_{>0}$ for $\mathsf{D}_9\langle \vec{t}_{\alpha} \rangle$.
We choose a basis $A$ of $\mathsf{D}_9$. 
Then $X_\alpha = (Q, A^{-1} \vec{t}_{\alpha})$,
with $Q=A^t A$,
is a representation of $\mathsf{D}_9\langle \vec{t}_{\alpha} \rangle$.

For non-integral $\alpha$ we find $\Min X_{\alpha}=\Min Q$ (using {\tt MAGMA} for example).
It follows (for example by Lemma~\ref{lem:strong-eutaxy} below) that
$X_\alpha$ is $2$-eutactic, but not $2$-perfect.
For integral $\alpha$ we find
$$
\Min X_{\alpha} = \Min Q \cup \{(x_1,\ldots,x_8,0)^t \in\{0,1\}^9 : \sum_{i=1}^8 x_i \equiv 0 \!\! \mod 2 \}
.
$$
Thus the vectors in $\Min X_{\alpha} \setminus \Min Q$ span only
an $8$-dimensional space.
Therefore $X_{\alpha}$ is not $2$-perfect. 
Nevertheless, a corresponding calculation shows that $X_{\alpha}$ is $2$-eutactic, 
as in the case of non-integral $\alpha$.

In order to see that $X_{\alpha}$ is non-isolated $2$-extreme,
we can apply Proposition~\ref{prop:sufficient-for-m-extreme} below.
One easily checks that for integral~$\alpha$
(hence for the lattice $\Lambda_9$) we have only one
degree of freedom for a local change of~$\vec{t}_{\alpha}$ 
giving an equally dense sphere packing. 
For non-integral~$\alpha$ we have nine degrees of freedom 
for such a modification.
\newline\rightline{$\Box$}
\end{example}

Non-isolated $m$-extreme sets as in this
example can occur for periodic forms~$X\in\sdmo$,
only if $(Q^{-1},\vec{0})\in \bd \MV(X)$
(which is for example always the case if $X$ is $m$-eutactic, but not $m$-perfect). 
In this case it is in general not clear what
an infinitesimal change of $X$ in a direction $N\in\sdm$ leads to
(already assuming it is orthogonal to $(Q^{-1},\vec{0})$ as well as
in the boundary of the set $\MP(X)$ in~\eqref{eqn:linear-cone-approx}).
If $\MF(X)$ denotes the unique face of $\MV(X)$
containing $(Q^{-1},\vec{0})$ in its relative interior,
then this ``set of uncertainty'' is equal to
the face of $\MP(X)$ dual to $\MF(X)$, that is, equal to
\begin{equation} \label{eqn:set-of-uncertainty}
\MU(X)=
\{ N \in \MP(X) : \langle V, N \rangle = 0  \; \mbox{for all} \, V\in \MF(X) 
\}
.
\end{equation}
Or in other words, the set $\MU(X)$
is the intersection of $\MP(X)$ with the hyperplane
orthogonal to $(Q^{-1},\vec{0})$.
Note that it is possible to determine $\MF(X)$ 
(and hence a description of $\MU(X)$ by linear inequalities)
computationally, using linear programming techniques.

\subsection*{Purely translational changes}

Below we give an additional sufficient condition for $m$-extremeness.
For this we consider the case when
all directions in $\MU(X)$ are 
``{\em purely translational changes}'' $N = (0,\vec{t}^N) \in \sdm$.
A vivid interpretation of
a purely translational change 
can be given by thinking of the corresponding modification 
of a periodic sphere packing. The spheres of each lattice translate
are jointly moved. If in such a local change all contacts 
among spheres are lost, we can increase their radius and
obtain a new sphere packing with larger density. 
If some contacts among spheres are preserved however, 
the sphere packing density remains the same.
The latter case is captured in the following proposition,
which gives an easily testable criterion 
for $m$-extremeness. 
We apply this proposition in Section~\ref{sec:periodic-extreme},
where we consider potential local improvements of
best known packing lattices to periodic non-lattice sets.

\begin{proposition}    \label{prop:sufficient-for-m-extreme}
For a periodic form~$X=(Q,\vec{t})\in \sdmo$ 
with $(Q^{-1},\vec{0})\in \bd \MV(X)$,
let $\MU(X)$ be contained in 
$$
\{ 
(\vec{0},\vec{t}^N)\in\sdm : \vec{t}^N_i=\vec{t}^N_j \; 
\mbox{for at least one} \, \vec{t}_i-\vec{t}_j-\vec{v}\in\Min X \;\mbox{with}\; \vec{v}\in\Z^d
\}
.
$$
Then $X$ is (possibly non-isolated) $m$-extreme.
\end{proposition}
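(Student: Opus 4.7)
I proceed by contradiction. Suppose a sequence $X_k\to X$ in $\sdmo$ has $\delta(X_k)>\delta(X)$. Using the scale invariance of $\delta$, I rescale the $Q$-component of each $X_k$ so that $\lambda$ is preserved, obtaining $Y_k\in\MNR_{m}$ with $Y_k\to X$ and $\det Y_k<\det X$. The task reduces to showing $\det$ is locally minimized at $X$ on the generalized Ryshkov set. Passing to a subsequence, the unit directions $n_k=(Y_k-X)/|Y_k-X|$ converge to some unit $n\in\sdm$.

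The first step is to argue $n\in\MU(X)$. The constraints $p_{i,j,\vec{v}}(Y_k)\geq\lambda(X)$ for $(i,j,\vec{v})\in\Min X$ linearize to $\langle(\grad p_{i,j,\vec{v}})(X),n\rangle\geq 0$, placing $n\in\MP(X)$. The inequality $\det Y_k<\det X$ linearizes to $\langle V,n\rangle\leq 0$ with $V=(Q^{-1},\vec{0})$, and since $V\in\MV(X)$ and $n\in\MP(X)$ are in polar position, $\langle V,n\rangle=0$. As $V$ lies in the relative interior of $\MF(X)$ and $\langle W,n\rangle\geq 0$ throughout $\MV(X)$, we deduce $\langle W,n\rangle=0$ on all of $\MF(X)$, so $n\in\MU(X)$ by~\eqref{eqn:set-of-uncertainty}. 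Now invoke the hypothesis: $n=(\vec{0},\vec{t}^n)$ is purely translational and there is a representation $\vec{w}=\vec{t}_i-\vec{t}_j-\vec{v}\in\Min X$ with $\vec{t}^n_i=\vec{t}^n_j$. Along the exact ray $X+\tau n$ the $Q$-part is unchanged and $\vec{w}$ is preserved, so $\det(X+\tau n)=\det Q$ and $p_{i,j,\vec{v}}(X+\tau n)=Q[\vec{w}]=\lambda(X)$; hence $\lambda(X+\tau n)\leq\lambda(X)$ and $\delta(X+\tau n)\leq\delta(X)$ for every $\tau$, realizing the ``possibly non-isolated'' conclusion in this exact direction.

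\textbf{Main obstacle.} The delicate part is promoting the exact-direction conclusion to the actual sequence $Y_k$, which lies only approximately along $n$. Since $\partial_\tau p_{i,j,\vec{v}}(X+\tau n)\equiv 0$ forces $\langle(\grad p_{i,j,\vec{v}})(X),n\rangle=0$, the Taylor expansion $p_{i,j,\vec{v}}(Y_k)=\lambda(X)+t_k\langle(\grad p_{i,j,\vec{v}})(X),\epsilon_k\rangle+t_k^2 B_k+O(t_k^3)$ (with $n_k=n+\epsilon_k$, $t_k=|Y_k-X|$, $\epsilon_k\to\vec{0}$) has vanishing first-order leading part, while the second-order coefficient $B_k$ contains the non-negative contribution $Q[(\epsilon_k)_{\vec{t},i}-(\epsilon_k)_{\vec{t},j}]$. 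The target is to exhibit some $(i',j',\vec{v}')\in\Min X$ with $p_{i',j',\vec{v}'}(Y_k)\leq\det(Y_k)^{1/d}$, which together with $\lambda(Y_k)\leq p_{i',j',\vec{v}'}(Y_k)$ contradicts the standing inequality $\lambda(Y_k)/\det(Y_k)^{1/d}>\lambda(X)/\det(X)^{1/d}$. I expect to achieve this by exploiting the identity $\sum\beta_{(i',j',\vec{v}')}=d$ valid in any $\MV(X)$-decomposition $V=\sum\beta_{(i',j',\vec{v}')}(\grad p_{i',j',\vec{v}'})(X)$ (which follows from pairing with $Q$ and using $\trace(Q^{-1}Q)=d$ together with $Q[\vec{w}]=\lambda(X)$), so that the minimum of $\langle\grad p_{i',j',\vec{v}'},\epsilon_k\rangle$ over the support of the decomposition is bounded by the weighted average $\langle V,\epsilon_k\rangle/d$, and combining this first-order averaging with the strict concavity of $\det^{1/d}$ from Proposition~\ref{prop:concave-det} to absorb the second-order residues.
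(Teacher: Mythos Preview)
Your setup through the identification of the limiting direction $n\in\MU(X)$ is sound and recasts the paper's directional analysis as a sequential contradiction argument; the paper, too, works ray by ray with $N\in\MU(X)$. You are also right to flag the ``main obstacle'': ray-wise constancy of $\delta$ along $\MU(X)$ does not by itself yield a local maximum.

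Where your proposal falls short is in the resolution of that obstacle. First, the target inequality $p_{i',j',\vec{v}'}(Y_k)\le\det(Y_k)^{1/d}$ is dimensionally incoherent without a further normalization you have not set up; after your rescaling $\lambda(Y_k)=\lambda(X)$ the only meaningful target is $\det Y_k\ge\det X$. Second, the identity you quote is off by a factor: pairing $V=\sum\beta_l(\grad p_l)(X)$ with $(Q,\vec{0})$ gives $\sum\beta_l=d/\lambda(X)$, not $d$. Third, and most importantly, strict concavity of $\det^{1/d}$ gives you nothing here: since $Q^n=0$, $\det^{1/d}$ is \emph{exactly constant} along $n$, and its second-order variation is $O(t_k^2|\epsilon_k|^2)$, which cannot dominate the first-order residues $t_k\langle G_l,\epsilon_k\rangle$ in either regime $|\epsilon_k|\ll t_k$ or $|\epsilon_k|\gg t_k$. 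The averaging bound only gives $\min_l\langle G_l,\epsilon_k\rangle\le (\lambda(X)/d)\langle V,\epsilon_k\rangle$, which is of indeterminate sign and the wrong order to force a contradiction.

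The paper's route through this step is different and cleaner. It computes the Hessian
\[
(\hess p_{i,j,\vec{v}})(X)[N]=2Q[\vec{t}^N_i-\vec{t}^N_j]+4(\vec{t}^N_i-\vec{t}^N_j)^tQ^N\vec{w},
\]
and notes that for purely translational $N$ (so $Q^N=0$) this is $2Q[\vec{t}^N_i-\vec{t}^N_j]\ge 0$, with equality precisely when $\vec{t}^N_i=\vec{t}^N_j$. In the equality case $p_{i,j,\vec{v}}(X+\epsilon N)$ is \emph{identically} constant in $\epsilon$ (not just to second order), and so is $\det(X+\epsilon N)$; hence $\delta(X+\epsilon N)$ is constant. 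It is this exact constancy of both an active constraint and the objective along every $N\in\MU(X)$---combined with the strict positivity of the Hessian on the remaining constraints---that underlies the (possibly non-isolated) local maximum, and it is not captured by your first-order averaging sketch.
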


Note, if $X$ is $m$-eutactic (possibly not $m$-perfect), 
the set $\MU(X)$ is the {\em orthogonal complement} 
$\MV(X)^\perp$ of the linear hull of $\MV(X)$. 
Note also that Proposition~\ref{prop:sufficient-for-m-extreme} 
includes in particular the special case
where some $\vec{v}\in\Z^d$ are in $\Min X$
(and therefore $\vec{t}_i=\vec{t}_j=\vec{0}$ for $i=j=m$). 
This situation occurs for the $2$-periodic, fluid diamond packings
in the example above.

From the sphere packing interpretation of the proposition its assertion
is clear. Nevertheless, we give a proof below, based on
a local analysis in $\sdmo$. More than actually needed for the proof,
we analyze how $\delta$ changes locally
at a periodic form~$X\in\sdmo$ in a direction $N\in\MU(X)$.
As a byproduct, we obtain tools allowing a
computational analysis of possible local optimality 
for a given periodic form (not necessarily covered by the proposition).
These can for example be used in a numerical
search for good periodic sphere packings.

\begin{proof}[Proof of Proposition \ref{prop:sufficient-for-m-extreme}]
The generalized Voronoi domain $\MV(X)$ is spanned by 
gradients $(\grad p_{i,j,\vec{v}})(X)$ 
(as given in \eqref{eqn:pijv-gradient}),
one for each pair of vectors $\pm\vec{w}\in\Min X$.
The assumption that a direction $N=(Q^N,\vec{t}^N)$
is in $\MU(X)$ for a periodic form~$X=(Q,\vec{t})$, 
implies $\langle Q^{-1}, Q^N \rangle = 0$.
Moreover, for the unique maximal face $\MF(X)$ of $\MV(X)$ with $(Q^{-1},\vec{0})\in \relint \MF(X)$,
the condition that $N$ is orthogonal to some 
$(\grad p_{i,j,\vec{v}})(X)$ in $\MF(X)$  
translates into 
\begin{equation}  \label{eqn:gradient-condition}
\langle (\grad p_{i,j,\vec{v}})(X), N \rangle
= Q^N[\vec{w}] + 2 (\vec{t}_i^N-\vec{t}_j^N)^t Q \vec{w}
= 0
,
\end{equation}
with $\vec{w}=(\vec{t}_i-\vec{t}_j-\vec{v})$.
Recall that in the special case $i=j$ (and for $m=1$ anyway) 
$p_{i,j,\vec{v}}$ is linear and \eqref{eqn:gradient-condition} reduces to the 
condition $Q^N[\vec{w}]=0$; if then $N$ satisfies this linear condition, 
$p_{i,j,\vec{v}}(X+\epsilon N)$ is a constant function in $\epsilon$.

When $p_{i,j,\vec{v}}(X+\epsilon N)$ is a cubic polynomial 
in $\epsilon$ we need to use higher order information
in order to judge its behavior. 
An elementary calculation yields for the {\em Hessian}
\begin{equation}  \label{eqn:hessian-evaluation}
(\hess p_{i,j,\vec{v}})(X)[N] 
= 
2 Q [\vec{t}_i^N-\vec{t}_j^N] + 4 (\vec{t}_i^N-\vec{t}_j^N)^t Q^N \vec{w}
.
\end{equation}

Now how does $\delta$ change at~$X$ in direction~$N$, 
assuming it is in the set of uncertainty $\MU(X)$?
Among the polynomials $p_{i,j,\vec{v}}$ 
with $N$ satisfying \eqref{eqn:gradient-condition},
the fastest decreasing polynomial in direction $N$ determines 
$\lambda(X+\epsilon  N)$ for small enough $\epsilon$. 
Thus for the local change of $\delta$ in direction $N$,
we may restrict our attention to a polynomial $p_{i,j,\vec{v}}$ 
with the smallest value \eqref{eqn:hessian-evaluation} of its Hessian.

By Proposition~\ref{prop:concave-det} we know that
$\det^{1/d}$ decreases strictly at $X$
in a direction~$N\in\MU(X)$ 
if and only if $Q^N\not=0$.

For a purely translational change with $Q^N=0$, 
the function $\det^{1/d}$ remains constant.
On the other hand, because of \eqref{eqn:hessian-evaluation} and 
since $Q$ is positive definite,
we have $(\hess p_{i,j,\vec{v}})(X)[N]\geq 0$, 
with equality if and only if $\vec{t}_i^N-\vec{t}_j^N=\vec{0}$.
The latter implies
that $p_{i,j,\vec{v}}(X+\epsilon N)$ is a constant function of 
$\epsilon$.
Thus for purely translational changes $N=(0,\vec{t}^N)\in \MU(X)$,
the density function $\delta(X+\epsilon N)$ is constant
for small enough $\epsilon \geq 0$, 
if $\vec{t}_i^N=\vec{t}_j^N$ for
some pair $(i,j)$ with 
$\vec{t}_i-\vec{t}_j-\vec{v}\in\Min X$ 
(for a suitable $\vec{v}\in\Z^d$).
This proves the proposition.
\end{proof}

Note that our argumentation in the proof 
also shows that $\delta(X+\epsilon N)$
increases for small $\epsilon > 0$,
for a purely translational change $N=(0,\vec{t}^N)\in \MU(X)$
with $\vec{t}_i^N\not=\vec{t}_j^N$ for all
pairs $(i,j)$ with $\vec{t}_i-\vec{t}_j-\vec{v}\in\Min X$
(for some $\vec{v}\in\Z^d$).
This case corresponds to a modification of a periodic sphere packing
in which all contacts among spheres are lost.

\section{Periodic extreme sets}

\label{sec:periodic-extreme}

A given periodic set has many representations by periodic forms,
in spaces $\sdmo$ with varying $m$.
For example, by choosing some sublattice of $\Z^d$, we
can add additional translational parts.

It could happen that a periodic set $\Lambda$
with a given representation $X\in\sdmo$ is
$m$-extreme, whereas a second representation $X'\in\MS^{d,m'}$
is not $m'$-extreme. We are not aware of an example though. 
However, in some cases we are certain that the packing density of 
no representation of~$\Lambda$ can locally be improved.

\begin{definition} \label{def:periodic-extreme}
A periodic set is {\em periodic extreme} 
if it is $m$-extreme for
all possible representations $X\in\sdmo$.
\end{definition}

Theorem \ref{thm:main-periodic} below gives a sufficient condition for a lattice to be periodic extreme.
For its statement we need the notion of {\em strong eutaxy}   
for lattices, respectively PQFs:
A form $Q\in\sdo$ (and a corresponding lattice) is called {\em strongly eutactic} 
if 
\begin{equation}  \label{eqn:strong-eutaxy-condition}
Q^{-1} = \alpha \sum_{\vec{x}\in\Min Q} \vec{x}\vec{x}^t
\end{equation}
for some $\alpha>0$, i.e., if the coefficients in the eutaxy 
condition \eqref{eqn:eutaxy-algebraic} are all equal.
It is well-known that a PQF~$Q$ is strongly eutactic if and only if the 
vectors in $\Min Q$ form a so-called {\em spherical $2$-design}  
with respect to $Q$ (see \cite{venkov-2001}, \cite[Corollary~16.1.3]{martinet-2003}).

\begin{lemma} \label{lem:strong-eutaxy}
Any representation $X\in\sdmo$ of a strongly eutactic lattice (respectively PQF) is $m$-eutactic.
\end{lemma}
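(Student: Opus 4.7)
The plan is to exhibit $(Q^{-1}, \vec{0})$ as a strictly positive conic combination of the generators of $\MV(X)$; by the standard characterization of the relative interior of a finitely generated cone, this gives $m$-eutaxy directly.

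Let $X = (Q, \vec{t}) \in \sdmo$ represent a strongly eutactic lattice $L$. Writing $Q = A^t A$ for some $A \in \gldr$, the image $L' := A\Z^d$ is a sublattice of index $m$ in $L$, and $L = \bigcup_{i=1}^m (\vec{s}_i + L')$ with $\vec{s}_i := A\vec{t}_i$ (and $\vec{s}_m = \vec{0}$). The first step is bookkeeping: verify $\Min X = A^{-1}\Min L$ and that every $\vec{y} \in \Min L$ is obtained from exactly $m$ ordered triples $(i,j,\vec{v})$ via $\vec{y} = \vec{s}_i - \vec{s}_j - A\vec{v}$. Indeed, for each $i \in \{1,\dots,m\}$ the coset condition $\bar{\vec{s}}_j = \bar{\vec{s}}_i - \bar{\vec{y}}$ in $L/L'$ determines $j$ uniquely, and $\vec{v}$ is then forced.

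By the spherical-design characterization cited just before the lemma, strong eutaxy of $L$ is the basis-free identity
$$
\sum_{\vec{y}\in \Min L} \vec{y}\vec{y}^t = c\,I
$$
for some $c > 0$. The heart of the argument is to place the uniform positive weight $1/(mc)$ on every ordered triple $(i,j,\vec{v})$ with $\vec{t}_i - \vec{t}_j - \vec{v} \in \Min X$, and sum the corresponding gradients~\eqref{eqn:pijv-gradient}. Grouping the $m$ triples that share a common $\vec{y} \in \Min L$, the $\sd$-component collapses by the design identity to $\tfrac{1}{c}A^{-1}(cI)A^{-t} = (A^tA)^{-1} = Q^{-1}$. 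Each translational column $k \in \{1,\dots,m-1\}$ vanishes $\vec{y}$ by $\vec{y}$: writing $\vec{w} = A^{-1}\vec{y}$, precisely one triple for this $\vec{y}$ has $i = k$ (contributing $+2Q\vec{w}$) and precisely one has $j = k$ (contributing $-2Q\vec{w}$), and these cancel.

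Because every ordered triple receives positive weight and the symmetry $(\grad p_{i,j,\vec{v}})(X) = (\grad p_{j,i,-\vec{v}})(X)$ makes this a positive combination of every distinct generator of $\MV(X)$, the point $(Q^{-1}, \vec{0})$ lies in $\relint \MV(X)$. The only real obstacle is the bookkeeping in the first step — separating the minimum vectors of $L$ itself (which carry the $2$-design information) from those of the sublattice $L'$, and recognizing that the ordered triples cover $\Min L$ with uniform multiplicity $m$. Once this combinatorial structure is in place, both the translational cancellation and the collapse of the $\sd$-component to $Q^{-1}$ are forced by the strong eutaxy identity.
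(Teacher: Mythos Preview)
Your proof is correct and follows essentially the same approach as the paper: both arguments show that each $\vec{w}\in\Min X$ arises from exactly $m$ ordered triples $(i,j,\vec{v})$ --- you via the coset-uniqueness argument $\bar{\vec{s}}_j=\bar{\vec{s}}_i-\bar{\vec{y}}$, the paper via observing that the map $i\mapsto j$ (for fixed $\vec{w}$) decomposes $\{1,\dots,m\}$ into cycles --- and then place equal positive weight on all gradients so that the translational components cancel and the $\sd$-components sum to $(Q^{-1},\vec{0})$ by the strong eutaxy identity. Your coset phrasing is a bit more direct than the paper's chain-of-touching-spheres picture, but the combinatorial content and the use of uniform coefficients to land in $\relint\MV(X)$ are identical.
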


\begin{proof}
Let $Q\in\sdo$ be strongly eutactic,
satisfying \eqref{eqn:strong-eutaxy-condition} for some $\alpha>0$.
Let $X=(Q^X,\vec{t}^X)\in\sdmo$ be some representation of~$Q$, 
e.g. with $m>1$.
Let the corresponding eutactic lattice be denoted by~$\Lambda$.
Then $Q^X$ is the Gram matrix of a basis $A\in \gldr$ 
of a sublattice~$L$ of~$\Lambda$. The columns of $\vec{t}^X$ 
are the coordinates of lattice points of~$\Lambda$ relative to~$A$.

For a fixed $\vec{w}\in \Min X$ we define an abstract graph, 
whose vertices are the indices in $\{1,\ldots,m\}$.
Two vertices $i$ and $j$ are connected by an edge 
whenever there is some $\vec{v}\in \Z^d$ such that $\vec{w}=\vec{t}^X_i-\vec{t}^X_j-\vec{v}$.
In other words, the graph reflects via an edge $(i,j)$ that spheres
of packing radius $\lambda(\Lambda)$ around points of the two sublattice translates
$A(\vec{t}^X_i+Z^d)$ and $A(\vec{t}^X_j+Z^d)$ touch.
For $\vec{z}\in Z^d$, the sphere with center~$A(\vec{t}^X_j+\vec{z})$ touches the sphere with center~$A(\vec{t}^X_j+\vec{z}+\vec{w})$.
Since the periodic form~$X$ represents a lattice $\Lambda$, we find a
chain of touching spheres at centers~$A(\vec{t}^X_j+\vec{z}+k\vec{w})$, with $k=0,1,\ldots$.
Modulo some natural number less or equal to~$m$ these centers belong to the 
same lattice translate of~$L$.
As a consequence, we find that the graph defined above is a disjoint union of cycles. 
So $\vec{w}$ induces a partition $(I_1,\dots,I_k)$ of $\{1,\ldots,m\}$.

Let~$I$ be an index set of this partition 
(containing the indices of a fixed cycle of the defined graph).
Summing over all triples $(i,j,\vec{v})$ with $i,j\in I$ and $\vec{v}\in\Z^d$ 
such that $\vec{w}=\vec{t}^X_i-\vec{t}^X_j-\vec{v}\in\Min X$,
we find (using \eqref{eqn:pijv-gradient}):
$$
\sum_{ \genfrac{}{}{0pt}{2}{(i,j,\vec{v}) \in I^2\times\Z^d}{\mbox{\tiny with} \; \vec{v}=\vec{t}^X_i-\vec{t}^X_j-\vec{w}}} 
(\grad p_{i,j,\vec{v}})(X)
=
2|I| (\vec{w}\vec{w}^t, \vec{0})
.
$$
The factor $2$ comes from the symmetry 
$\grad p_{i,j,\vec{v}}=\grad p_{j,i,-\vec{v}}$.
Summation over all index sets $I$ of the partition yields
\begin{equation}   \label{eqn:summing_gradients}
\sum_{\genfrac{}{}{0pt}{2}{(i,j,\vec{v}) \in \{1,\ldots,m\}^2\times\Z^d}{\mbox{\tiny with} \; \vec{v}=\vec{t}^X_i-\vec{t}^X_j-\vec{w}}} 
(\grad p_{i,j,\vec{v}})(X)
= 2 m (\vec{w}\vec{w}^t, \vec{0})
.
\end{equation}
As a consequence we find by the strong eutaxy condition~\eqref{eqn:strong-eutaxy-condition} that 
$$
(Q^{-1},\vec{0})
\; = \;
(\alpha/2m)
\sum_{\genfrac{}{}{0pt}{2}{\vec{w}\in\Min X,   (i,j,\vec{v}) \in \{1,\ldots,m\}^2\times\Z^d}{\mbox{\tiny with} \; \vec{v}=\vec{t}^X_i-\vec{t}^X_j-\vec{w} }}
(\grad p_{i,j,\vec{v}})(X)
,
$$
with a suitable $\alpha>0$.
Thus $X$ is $m$-eutactic.
\end{proof}

Not all PQFs (or lattices) which are 
strongly eutactic have to be perfect.
For example the lattices~$\Z^n$ for $n \geq 2$ are of this kind.
But if a strongly eutactic PQF is in addition also perfect, 
then the following theorem shows that this 
is sufficient for it to be periodic extreme.
Note that this applies in particular to so called 
{\em strongly perfect} lattices and PQFs. 
For these lattices the vectors in $\Min Q$ form a spherical $4$-design 
with respect to~$Q$ (see \cite{nebe-2002} or \cite[Chapter~16]{martinet-2003} for further details).

\begin{theorem} \label{thm:main-periodic}
Perfect, strongly eutactic lattices (respectively PQFs) are periodic extreme.
\end{theorem}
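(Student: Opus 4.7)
The strategy combines Lemma~\ref{lem:strong-eutaxy} with the local optimality criteria of Section~\ref{sec:local-analysis}. Let $\Lambda$ be perfect and strongly eutactic with PQF $Q$, and fix an arbitrary representation $X=(Q^X,\vec{t}^X)\in\sdmo$ corresponding to a sublattice $L=A\Z^d\subset\Lambda$ with cosets $\vec{t}^X_1,\ldots,\vec{t}^X_m$. By Lemma~\ref{lem:strong-eutaxy}, $X$ is $m$-eutactic. If in addition $X$ is $m$-perfect, Theorem~\ref{thm:m-extreme-characterization} immediately yields isolated $m$-extremeness, so the only substantive case is when $X$ is $m$-eutactic but not $m$-perfect; here the remark after Proposition~\ref{prop:sufficient-for-m-extreme} gives $\MU(X)=\MV(X)^\perp$, and I aim to verify the hypothesis of that proposition.

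The first substep is to show that $Q^N=0$ for every $N=(Q^N,\vec{t}^N)\in\MV(X)^\perp$. The summation identity~\eqref{eqn:summing_gradients} used in the proof of Lemma~\ref{lem:strong-eutaxy} exhibits $(\vec{w}\vec{w}^t,\vec{0})\in\MV(X)$ for each $\vec{w}\in\Min X$, so orthogonality forces $Q^N[\vec{w}]=\langle Q^N,\vec{w}\vec{w}^t\rangle=0$ for every $\vec{w}\in\Min X$. Under the basis change linking $L$ to $\Lambda$, $\Min X$ is in bijection with $\Min Q$; by Proposition~\ref{prop:prefect-implies-d-linear-indep} (perfection) the rank-one forms $\vec{x}\vec{x}^t$ span $\sd$, hence so do the $\vec{w}\vec{w}^t$, and $Q^N=0$. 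Any $N\in\MU(X)$ is therefore purely translational.

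The second and harder substep is to show that, for every purely translational $N=(\vec{0},\vec{t}^N)\in\MV(X)^\perp$, some minimum representation $\vec{w}=\vec{t}^X_i-\vec{t}^X_j-\vec{v}\in\Min X$ satisfies $\vec{t}^N_i=\vec{t}^N_j$. I would split into two sub-cases. If $\Min\Lambda\cap L\neq\emptyset$, any $\vec{u}_0$ in this intersection yields $\vec{w}_0=A^{-1}\vec{u}_0\in\Z^d\cap\Min X$, which admits an $i=j$ representation so that $\vec{t}^N_i=\vec{t}^N_j$ is automatic (this matches the special case highlighted in the remark after Proposition~\ref{prop:sufficient-for-m-extreme}). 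Otherwise $\Min\Lambda\cap L=\emptyset$, and I would pass to Fourier analysis on the finite abelian group $G=\Lambda/L$: encode $N$ as $\nu:G\to\R^d$ with $\nu(0)=\vec{0}$, rewrite the orthogonality constraints as $(\nu(g+[\vec{u}])-\nu(g))\cdot\vec{u}=0$ for all $g\in G$ and $\vec{u}\in\Min\Lambda$, and expand $\nu$ in characters of $G$ to obtain the Fourier conditions $c_\chi\perp T_\chi:=\{\vec{u}\in\Min\Lambda:\chi([\vec{u}])\neq 1\}$ for every non-trivial character $\chi$. The main obstacle is to combine strong eutaxy---which in lattice coordinates reads $\sum_{\vec{u}\in\Min\Lambda}\vec{u}\vec{u}^t=c\,I$, making $\Min\Lambda$ a spherical $2$-design---with perfection to force $T_\chi$ to span $\R^d$ for every non-trivial $\chi$. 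If so, each $c_\chi=\vec{0}$, hence $\nu\equiv\vec{0}$ and $N=\vec{0}$, giving $\MU(X)=\{\vec{0}\}$, which contradicts the assumption that $X$ is not $m$-perfect; the case split would therefore be exhaustive and Proposition~\ref{prop:sufficient-for-m-extreme} would apply in every case.
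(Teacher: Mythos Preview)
Your overall architecture and your first substep match the paper: it too invokes Lemma~\ref{lem:strong-eutaxy}, disposes of the $m$-perfect case via Theorem~\ref{thm:m-extreme-characterization}, and then uses identity~\eqref{eqn:summing_gradients} together with perfection to force $Q^N=0$ for every $N\in\MU(X)=\MV(X)^\perp$. (Your use of perfection here---that the rank-one forms $\vec{w}\vec{w}^t$ span $\sd$---is in fact stated more precisely than in the paper, which appeals only to the existence of $d$ linearly independent vectors in $\Min X$.)

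The second substep is where you part ways. The paper does not pass to characters of $G=\Lambda/L$. After reducing to purely translational $N$ and the constraints~\eqref{eqn:orthogonality-reduced-to}, it fixes $d$ linearly independent $\vec{w}_1,\dots,\vec{w}_d\in\Min X$, lets $I\subset\{1,\dots,m\}$ be the set of indices reachable from~$m$ through chains of these vectors, and argues by a direct count of linear equations that $\vec{t}_i^N=\vec{0}$ for all $i\in I$; any edge inside $I$ then witnesses the hypothesis of Proposition~\ref{prop:sufficient-for-m-extreme}. The paper never tries to show $\MU(X)=\{0\}$, only to secure one fixed edge for each $N$, and it does not split on whether $\Min\Lambda\cap L$ is empty (that case is absorbed into the $i=j=m$ possibility already allowed by the proposition).

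Your Fourier reduction to $c_\chi\perp T_\chi$ is correct and attractive, but the ``main obstacle'' you flag is a genuine gap, and perfection plus strong eutaxy do not obviously close it. What you would need is that $\Min\Lambda\setminus L_\chi$ spans $\R^d$ for \emph{every} intermediate sublattice $L\subset L_\chi\subsetneq\Lambda$, and the $2$-design identity $\sum_{\vec{u}\in\Min\Lambda}\vec{u}\vec{u}^t=cI$ does not by itself prevent $\Min\Lambda\setminus L_\chi$ from lying in a hyperplane---it only forces the complementary sum over $\Min\Lambda\cap L_\chi$ to compensate. Worse, nothing in your hypotheses excludes $\Min\Lambda$ generating only a proper sublattice $M_0\subsetneq\Lambda$; with $L=n\Lambda$ for large $n$ and any non-trivial $\chi$ trivial on the image of $M_0$ one gets $T_\chi=\emptyset$, so $c_\chi$ is unconstrained and your contradiction with non-$m$-perfection collapses. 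Even then the theorem can still hold via Proposition~\ref{prop:sufficient-for-m-extreme} through a non-trivial fixed edge, so your dichotomy ``either an $i=j$ edge or $\MU(X)=\{0\}$'' is too coarse; the paper's connectivity argument is aimed precisely at this middle ground.
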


\begin{proof}
Let $Q\in\sdo$ be perfect and strongly eutactic. 
Hence the vectors in $\Min Q$ span $\R^d$ (by Proposition \ref{prop:prefect-implies-d-linear-indep})
and satisfy \eqref{eqn:strong-eutaxy-condition} for some $\alpha>0$.
Let $X=(Q^X,\vec{t}^X)\in\sdmo$ be a representation of $Q$.
By Lemma~\ref{lem:strong-eutaxy}, $X$ is $m$-eutactic.
If $X$ is $m$-perfect as well, we know by Theorem \ref{thm:m-extreme-characterization}
that $X$ is also $m$-extreme. 

So let us assume that $X$ is not $m$-perfect; hence the generalized Voronoi domain~$\MV(X)$ 
is not full-dimensional. We want to apply Proposition~\ref{prop:sufficient-for-m-extreme}.
For this we choose 
$$
N=(Q^N,\vec{t}^N)\in \MU(X)=\MV(X)^{\perp} \quad \mbox{with} \quad N\not=0
.
$$
(Recall the definition of $\MU(X)$ from~\eqref{eqn:set-of-uncertainty}
and that $\MU(X)=\MV(X)^{\perp}$ if $X$ is $m$-eutactic.)
By this assumption we have in particular
$$\langle N, (\grad p_{i,j,\vec{v}})(X) \rangle = 0$$
for all triples $(i,j,\vec{v})$ with $\vec{w}=\vec{t}^X_i-\vec{t}^X_j-\vec{v}\in \Min X$.
Using equation~\eqref{eqn:summing_gradients}, which we obtained in the proof of Lemma~\ref{lem:strong-eutaxy},
we get $\langle N, (\vec{w}\vec{w}^t,\vec{0}) \rangle = Q^N[\vec{w}]=0$ 
for every fixed $\vec{w}\in \Min X$.

By Proposition~\ref{prop:prefect-implies-d-linear-indep}
there exist $d$~linearly independent $\vec{w}$ in $\Min X$,
which implies $Q^N=0$.
Using \eqref{eqn:gradient-condition}, we obtain
\begin{equation}   \label{eqn:orthogonality-reduced-to}
0 = \langle N, (\grad p_{i,j,\vec{v}})(X) \rangle = 2 (\vec{t}_i^N-\vec{t}_j^N)^t Q \vec{w}
.
\end{equation}
If $\vec{t}_i^N-\vec{t}_j^N=\vec{0}$ for some pair $(i,j)$
we can apply Proposition \ref{prop:sufficient-for-m-extreme}.
Note that this includes in particular the case $i=j=m$
($\vec{t}_i^N=\vec{t}_j^N=\vec{0}$) if $\vec{v}\in\Z^d\cap \Min X$.
So we may assume that such $\vec{v}$ do not exist.

We choose~$d$~linearly independent vectors $\vec{w}_1,\ldots , \vec{w}_d \in \Min X$
(that exist by Proposition~\ref{prop:prefect-implies-d-linear-indep}).
By the assumption that non of the $\vec{w}_i$ is integral and by the assumption
that $X$ represents a lattice, each $\vec{w}_i$ connects the origin~$\vec{t}^X_m=\vec{0}$
to another translation vector $\vec{t}^X_j$ (with $j\not=m$) via $\vec{w}_i = \vec{t}^X_j -\vec{v}$ for some $\vec{v}\in \Z^d$. 
In the same way each of the chosen minimal vectors connects the translation vector with index~$i$ to other translation vectors.
We denote by~$I$ the subset of $\{1,\dots, m\}$ that is connected to
the index~$m$ (respectively to the origin~$\vec{0}$) via a sequence of 
such links through the chosen $d$~minimal vectors.
For each index~$i\in I$ we get from the minimal vectors 
$d$~independent linear conditions~\eqref{eqn:orthogonality-reduced-to}
for the differences $\vec{t}_i^N-\vec{t}_j^N$, with suitable $j\in I\setminus\{i\}$. 
Overall we obtain $d|I|$~independent equations for~$d|I|$ differences. 
We deduce that all of them vanish. Moreover, as $\vec{t}_m^N = \vec{0}$ we even find
$\vec{t}_i^N = \vec{0}$ for all indices~$i\in I$.

\end{proof}

The root lattices $\mathsf{A}_d$, $\mathsf{D}_d$ and $\mathsf{E}_d$, as well as the Leech lattice
are known to be perfect and strongly eutactic (cf.~\cite{martinet-2003}).
These lattices are known to solve the 
lattice sphere packing problem in dimensions $d\leq 8$ and $d=24$ 
(see Table~\ref{tab:sphere-packing-results}).
As an immediate consequence of Theorem~\ref{thm:main-periodic}, 
we find that they cannot locally be improved to a periodic non-lattice set
with greater sphere packing density.

\begin{corollary}  \label{cor:root_lattice_periodic_extreme}
The lattices $\mathsf{A}_d$, for $d\geq 2$, $\mathsf{D}_d$, for $d\geq 3$, 
and $\mathsf{E}_d$, for $d=6,7,8$, as well as the Leech lattice
are periodic extreme.
\end{corollary}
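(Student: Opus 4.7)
The plan is to reduce the corollary directly to Theorem~\ref{thm:main-periodic}. That theorem asserts that any PQF which is simultaneously perfect and strongly eutactic is periodic extreme, so for each of the named lattices it suffices to verify these two classical properties. In this sense the corollary is mainly a bookkeeping exercise: cite the known structural facts about $\mathsf{A}_d$, $\mathsf{D}_d$, $\mathsf{E}_d$ and the Leech lattice $\Lambda_{24}$, then invoke the theorem.

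First I would recall that all four families are perfect. This is standard material (see, e.g., \cite{martinet-2003}); for $\mathsf{A}_d$ and $\mathsf{D}_d$ one checks directly that the symmetric tensors $\vec{x}\vec{x}^t$ arising from the minimal vectors span $\sd$, and for $\mathsf{E}_6,\mathsf{E}_7,\mathsf{E}_8$ and $\Lambda_{24}$ perfection is classical. Next I would note strong eutaxy, i.e., the existence of $\alpha>0$ with $Q^{-1}=\alpha\sum_{\vec{x}\in\Min Q}\vec{x}\vec{x}^t$. The most conceptual way to obtain this uniformly is via the characterization recalled in the paper: a PQF is strongly eutactic if and only if its set of minimal vectors forms a spherical $2$-design with respect to $Q$. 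The lattices in question all have their minimal vectors lying in a single orbit of a transitive (in fact, $2$-homogeneous) group of automorphisms acting on the minimal shell, and this already forces the design condition. Concretely, $\mathsf{A}_d$, $\mathsf{D}_d$ ($d\geq 3$), $\mathsf{E}_6,\mathsf{E}_7,\mathsf{E}_8$ are well-documented as strongly eutactic in \cite{martinet-2003}, and $\Lambda_{24}$ is in fact strongly perfect (its minimal vectors form a spherical $4$-design), hence \emph{a fortiori} strongly eutactic.

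With both properties in hand for each of the listed lattices, the statement is immediate from Theorem~\ref{thm:main-periodic}: any representation $X\in\sdmo$ of such a lattice is $m$-eutactic by Lemma~\ref{lem:strong-eutaxy}, and the perfection of $Q$ provides the $d$ linearly independent vectors in $\Min X$ needed in the proof of Theorem~\ref{thm:main-periodic} to conclude $m$-extremeness.

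The main (and essentially only) obstacle is that strong eutaxy is stronger than eutaxy and must be checked lattice-by-lattice; in particular one must confirm it for the $\mathsf{D}_d$ series for every $d\geq 3$, not merely for small cases. I would handle this by appealing to the $2$-design characterization together with the transitivity of the automorphism groups on minimal vectors (for $\mathsf{D}_d$, $d\geq 3$, the signed permutation group acting on the $2\binom{d}{2}\cdot 2$ minimal vectors $\pm\vec{e}_i\pm\vec{e}_j$ is manifestly $2$-homogeneous, which gives the design property at once). All other cases are explicitly tabulated in \cite{martinet-2003}, so no further calculation is required.
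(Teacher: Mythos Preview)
Your proposal is correct and follows exactly the paper's approach: the paper simply notes that the root lattices $\mathsf{A}_d$, $\mathsf{D}_d$, $\mathsf{E}_d$ and the Leech lattice are known to be perfect and strongly eutactic (citing \cite{martinet-2003}) and then invokes Theorem~\ref{thm:main-periodic}. Your additional discussion of the $2$-design criterion and automorphism-group transitivity is a reasonable elaboration of what the paper leaves to the reference, but it is not a different route.
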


We also checked whether or not Theorem~\ref{thm:main-periodic} can be applied to 
other dimensions~$d \leq 24$. 
For these dimensions the so-called {\em laminated lattices}~$\Lambda_d$  
and {\em sections $K_d$ of the Leech lattice}  
give the densest known lattice sphere packings.
The lattices~$K_d$ are different from $\Lambda_d$
(and at the same time give the densest known lattice sphere packings) 
only in dimensions $d=11,12,13$.
For these $d$, the lattice $K_d$ is strongly eutactic only for $d=12$,
when $K_d$ is also known as {\em Coxeter-Todd lattice}.  
The laminated lattices $\Lambda_d$ give the densest known packing lattices
in dimensions $d=9,10$ and $d=14,\dots,24$ (for $d=18,\dots,24$ they coincide with $K_d$).
Among those values for $d$, the laminated lattices $\Lambda_d$ are strongly eutactic
if and only if $d=15,16$ or $d\geq 20$.
Concluding, we cannot exclude that
densest known lattice sphere packings in dimensions $d\in\{9,10,11,13,14,17,18,19\}$ can
locally be improved to better periodic sphere packings.
Further analysis is required here.

\section{Floating and strict periodic extreme lattices}

The last step of the proof of Theorem~\ref{thm:main-periodic} 
has a vivid interpretation if we 
think of a sphere packing described by the given lattice.
Let $X=(Q^X,\vec{t}^X)$ be one of its representations and let 
$A$ denote a sublattice basis with Gram matrix $Q^X$.
Then the sublattice translates $A(\vec{t}^X_i+Z^d)$ with $i\in I$ form a 
``rigid component'' of the sphere packing.
If we do not want to decrease the sphere packing density
in a local deformation we have to move all of its translates simultaneously.
This rigid component may actually be larger than the one used in the 
proof of Theorem~\ref{thm:main-periodic}. 
It may even consist of the whole packing.
A maximal rigid component of translates can 
be described via an abstract graph with vertices in $\{1,\ldots,m\}$:
$(i,j)$ is an edge
whenever there is some $\vec{v}\in \Z^d$ such that $\vec{t}^X_i-\vec{t}^X_j-\vec{v}\in\Min X$. 
Let $I$ be the set of indices~$i$ (vertices of the graph) connected by a path with~$m$.
If $|I|=m$ the whole packing forms one rigid component.
If $I$ is a strict subset of $\{1,\ldots,m\}$
we say a corresponding packing or lattice is {\em floating}.

In a floating packing each connected component of the graph 
above corresponds to a union of translates which can jointly locally be moved 
without changing~$\lambda (X)$ and~$\delta(X)$ respectively.
Examples are the fluid diamond packings described in the example of
Section~\ref{sec:local-analysis}.
The same applies to their higher-dimensional generalizations 
$\mathsf{D}_d^+ = \mathsf{D}_d \cup \left( \mathsf{D}_d + ( \tfrac{1}{2},\ldots, \tfrac{1}{2} ) \right)$
for $d\geq 10$ (see \cite[Section~4.7.3]{cs-1998}). 
For even~$d$ these $2$-periodic sets are actually lattices (hence $1$-periodic).
In fact $\mathsf{E}_8 = D_8^+$. 

\bigskip

We note that Theorem~\ref{thm:main-periodic} and  
Corollary~\ref{cor:root_lattice_periodic_extreme} give statements about local optimality of lattices, but not about strict
local optimality. With the assumptions of Theorem~\ref{thm:main-periodic}
alone strict local optimality cannot be ensured, as shown by floating lattices like $D_d^+$ for even~$d\geq 10$.
These lattices have the same minimal vectors as the corresponding root lattice~$D_d$
and therefore they give a series of perfect and strongly eutactic lattices that are periodic extreme by Theorem~\ref{thm:main-periodic}.
However, they can locally be modified to other $2$-periodic sets of the same density.

We think that a strengthening of Corollary~\ref{cor:root_lattice_periodic_extreme} is
possible for certain lattices that are non-floating, perfect and strongly eutactic.
These include in particular the $\mathsf{E}_8$ root lattice and the Leech lattice.
We think it is possible to show that these lattices are  {\em strict periodic extreme},
meaning they are isolated $m$-extreme for all possible representations $X\in\sdmo$.
By Lemma~\ref{lem:strong-eutaxy} and Theorem~\ref{thm:m-extreme-characterization} 
one has to show that a given non-floating, perfect and strongly eutactic lattice 
is $m$-perfect for every~$m$. Here some further work is required...

\section*{Acknowledgement}

The author thanks Henry Cohn, Renaud Coulangeon, Jacques Martinet, Frank Vallentin
and Giovanni Zanzotto for many useful discussions.
He moreover thanks an anonymous referee for several helpful suggestions.

%


\begin{thebibliography}{BBC98}

\bibitem[Bat01]{batut-2001}
C.~Batut, \emph{Classification of quintic eutactic forms}, Math. Comp.
  \textbf{70} (2001), 395--417.

\bibitem[BSS93]{bss-1993}
M.S.~Bazaraa, H.D.~Sherali, C.M.~Shetty, {\em Nonlinear
programming: Theory and algorithms}, Wiley, 1993.

\bibitem[BE00]{be-2000}
J.~Bierbrauer and Y.~Edel, \emph{{Dense sphere packings from new codes}}, J.
  Algebr. Comb. \textbf{11} (2000), 95--100.

\bibitem[Bli35]{blichfeldt-1934}
H.F. Blichfeldt, \emph{The minimum values of positive quadratic forms in six,
  seven and eight variables}, Math. Z. \textbf{39} (1935), 1--15.

\bibitem[BM96]{bm-1996}
A.M. Berg{\'e} and J.~Martinet, \emph{Sur la classification des r\'eseaux
  eutactiques}, J. London Math. Soc. (2) \textbf{53} (1996), 417--432.

\bibitem[CE03]{ce-2003}
H.~Cohn and N.~Elkies, \emph{New upper bounds on sphere packings I}, Ann. Math. {\bf 157} (2003), 689–-714.

\bibitem[CK09]{ck-2004}
H.~Cohn and A.~Kumar, \emph{Optimality and uniqueness of the {L}eech lattice
  among lattices}, Ann. Math. {\bf 170} (2009), 1003–-1050.

\bibitem[CS95]{cs-1995}
J.H. Conway and N.J.A. Sloane, \emph{What are all the best sphere packings in
  low dimensions?}, Discrete Comput. Geom. \textbf{13} (1995), 383--403.

\bibitem[CS96]{cs-1996}
\bysame, \emph{The antipode construction for sphere packings}, Invent. Math.
  \textbf{123} (1996), 309--313.

\bibitem[CS99]{cs-1998}
\bysame, \emph{Sphere packings, lattices and groups} (3rd ed.), Springer, 1999.

\bibitem[DSV07]{dsv-2007b}
M.~D{utour Sikiri\'c}, A.~Sch\"urmann, and F.~Vallentin, \emph{Classification
  of eight dimensional perfect forms}, Electron. Res. Announc. Amer. Math. Soc.
  \textbf{13} (2007), 21--32.

\bibitem[EGS02]{egs-2002}
P.~E{lbaz-Vincent}, H.~Gangl, and C.~Soul{\'e}, \emph{Quelques calculs de la
  cohomologie de {${\rm GL}\sb N(\mathbb Z)$} et de la {$K$}-th\'eorie de
  {$\mathbb Z$}}, C. R. Math. Acad. Sci. Paris \textbf{335} (2002), 321--324.

\bibitem[Gau40]{gauss-1840}
C.F. Gauss, \emph{Untersuchungen \"uber die {E}igenschaften der positiven
  tern\"aren quadratischen {F}ormen von {L}udwig {A}ugust {S}eeber}, J. Reine.
  Angew. Math. \textbf{20} (1840), 312--320.

\bibitem[Gro63]{groemer-1963}
H. Groemer, \emph{{E}xistenzs\"atze f\"ur {L}agerungen im {E}uklidischen {R}aum}, 
Math. Z. \textbf{81} (1963), 260–-278

\bibitem[GL87]{gl-1987}
P.M. Gruber and C.G. Lekkerkerker, \emph{Geometry of numbers}, North--Holland, 1987.

\bibitem[Gru07]{gruber-2007}
P.M. Gruber, \emph{Convex and discrete geometry}, Springer, 2007.

\bibitem[Hal05]{hales-2005}
T.C. Hales, \emph{{A proof of the {K}epler conjecture}}, Ann. Math \textbf{162}
  (2005), 1065--1185.

\bibitem[KZ77]{kz-1877}
A.~Korkine and G.~Zolotareff, \emph{Sur les formes quadratiques positives},
  Math. Ann. \textbf{11} (1877), 242--292.

\bibitem[Lag73]{lagrange-1773}
J.L. Lagrange, \emph{Recherches d'arithm\'etique}, Nouv. M\'em. Acad. Berlin
  (1773), 265--312, in Oeuvres de Lagrange III, 695--795.

\bibitem[LS70]{ls-1970}
J.~Leech and N.J.A.~Sloane, \emph{{New sphere packings in dimensions {$9$ -- $15$}}}, 
Bull. Am. Math. Soc. \textbf{76} (1970), 1006--1010.

\bibitem[Mar03]{martinet-2003}
J.~Martinet, \emph{Perfect lattices in {E}uclidean spaces}, Springer, 2003.

\bibitem[Min05]{minkowski-1905}
H.~Minkowski, \emph{Diskontinuit\"atsbereich f\"ur arithmetische
  \"{A}quivalenz}, J. Reine Angew. Math. \textbf{129} (1905), 220--274, Reprint
  in {\em Gesammelte Abhandlungen}, Band II, Teubner, 1911.

\bibitem[Neb02]{nebe-2002}
G.~Nebe, \emph{Gitter und {M}odulformen}, Jahresber. Deutsch. Math.-Verein.
  \textbf{104} (2002), no.~3, 125--144.

\bibitem[Nel74]{nelson-1974}
C.E.~Nelson, \emph{{The reduction of positive definite quinary quadratic
  forms}}, Aequationes Math. \textbf{11} (1974), 163--168.

\bibitem[PZ98]{pz-1998}
M.~Pitteri and G.~Zanzotto, \emph{Beyond Space Groups: the Arithmetic Symmetry of Deformable Muitilattices},
Acta Cryst. \textbf{54} (1998), 359--373.

\bibitem[Rys70]{ryshkov-1970}
S.S. Ryshkov, \emph{The polyhedron {$\mu (m)$} and certain extremal problems of
  the geometry of numbers}, Soviet Math. Dokl. \textbf{11} (1970), 1240--1244,
  translation from Dokl. Akad. Nauk SSSR 194, 514--517 (1970).

\bibitem[Sch09]{schuermann-2009}
A.~Sch\"urmann, \emph{Computational geometry of positive definite quadratic
  forms},  University Lecture Series {\bf 48}, AMS, 2009.

\bibitem[Sto75]{stogrin-1974}
M.I. Stogrin, \emph{Locally quasidensest lattice packings of spheres}, Soviet
  Math. Dokl. \textbf{15} (1975), 1288--1292, translation from Dokl. Akad. Nauk
  SSSR 218, 62--65 (1974).

\bibitem[Thu10]{thue-1910}
A.~Thue, \emph{{\"Uber die dichteste {Z}usammenstellung von kongruenten
  {K}reisen in einer {E}bene}}, Norske Vid. Selsk. Skr. \textbf{1} (1910),
  1--9.

\bibitem[Var95]{vardy-1995}
A.~Vardy, \emph{A new sphere packing in {$20$} dimensions}, Invent. Math.
  \textbf{121} (1995), 119--133.

\bibitem[Ven01]{venkov-2001}
B.B. Venkov, \emph{R\'eseaux et designs sph\'eriques}, R\'eseaux euclidiens,
  designs sph\'eriques et formes modulaires, Monogr. Enseign. Math., vol.~37,
  Enseignement Math., Geneva, 2001, pp.~10--86.

\bibitem[Vor07]{voronoi-1907}
G.F. Voronoi, \emph{Nouvelles applications des param\`etres continus \`a la
  th\'eorie des formes quadratiques. {P}remier {M}\'emoire. {S}ur quelques
  propri\'et\'es des formes quadratiques positives parfaites}, J. Reine Angew.
  Math. \textbf{133} (1907), 97--178.

\bibitem[Zie97]{ziegler-1998}
G.M. Ziegler, \emph{Lectures on polytopes}, Springer, 1997.


\bigskip


\centerline{\sc Software and Webpages}

\medskip

\bibitem[GAP]{gap}
\emph{The {GAP} group, {GAP} --- {G}roups, {A}lgorithms, {P}rogramming - a
  system for computational discrete algebra}, ver. 4.4,
  \url{http://www.gap-system.org/}.


\bibitem[MAG]{magma}
\emph{Computational {A}lgebra {G}roup, {U}niversity of {S}ydney, {MAGMA} ---
  high performance software for {A}lgebra, {N}umber {T}heory, and {G}eometry},
  ver. 2.13, \url{http://magma.maths.usyd.edu.au/}.


\bibitem[NS]{ns-2008}
G.~Nebe and N.J.A. Sloane, \emph{A catalogue of lattices}, published
  electronically at \url{http://www.research.att.com/~njas/lattices/}.


\end{thebibliography}

\providecommand{\bysame}{\leavevmode\hbox to3em{\hrulefill}\thinspace}
\providecommand{\MR}{\relax\ifhmode\unskip\space\fi MR }
\providecommand{\MRhref}[2]{%
  \href{http://www.ams.org/mathscinet-getitem?mr=#1}{#2}
}
\providecommand{\href}[2]{#2}

\end{document}